\documentclass[leqno,12pt]{amsart} 
\setlength{\textheight}{23cm}
\setlength{\textwidth}{16cm}
\setlength{\oddsidemargin}{0cm}
\setlength{\evensidemargin}{0cm}
\setlength{\topmargin}{0cm}
\usepackage{amssymb}

\usepackage{scalerel,stackengine}
\stackMath
\newcommand\reallywidehat[1]{%
\savestack{\tmpbox}{\stretchto{%
  \scaleto{%
    \scalerel*[\widthof{\ensuremath{#1}}]{\kern-.6pt\bigwedge\kern-.6pt}%
    {\rule[-\textheight/2]{1ex}{\textheight}}
  }{\textheight}%
}{0.5ex}}%
\stackon[1pt]{#1}{\tmpbox}%
}

%
%
%
\theoremstyle{plain} 
\newtheorem{theorem}{\indent\sc Theorem}[section]
\newtheorem{lemma}[theorem]{\indent\sc Lemma}
\newtheorem{corollary}[theorem]{\indent\sc Corollary}
\newtheorem{proposition}[theorem]{\indent\sc Proposition}

\theoremstyle{definition} 

%
\newtheorem*{theoreme*}{\indent\sc Theorem}
%

\usepackage{lmodern}
\usepackage[all]{xy}

\newcommand{\R}{\mathbb{R}}
\renewcommand{\H}{\mathbb{H}}
\newcommand{\C}{\mathbb{C}}

\newcommand{\sn}{\mathbb{S}}

\newcommand{\I}{\mathbb{J}_{\C\mathrm{P}^2}}

\newcommand{\hh}{\mathcal{H}}
\newcommand{\vv}{\mathcal{V}}

\newcommand{\id}{\mathrm{id}}
\newcommand{\scal}{\mathrm{scal}}
\newcommand{\Vect}{\mathrm{Vect}}
\newcommand{\Span}{\mathrm{span}}
\newcommand{\Hom}{\mathrm{Hom}}

\newcommand{\F}{\mathbb{F}_{1,2}}
\newcommand{\sq}{\mathbb{S}^{4}}
\newcommand{\CP}{\mathbb{C}\mathrm{P}}
\newcommand{\CH}{\mathbb{C}\mathrm{H}}
\newcommand{\It}{\mathbb J_2}
\newcommand{\Io}{\mathbb J_1}
\newcommand{\so}{\mathfrak{so}}
\newcommand{\SO}{{\rm SO}}
\newcommand{\U}{{\rm U}}

\begin{document}

\title{Hypersurfaces of the nearly K{\"a}hler twistor spaces $\CP^3$ and $\F$}
\author{Guillaume~Deschamps}
\author{Eric~Loubeau}

\subjclass[2010]{Primary 53C28; Secondary 32L25, 53C15, 53C55.}
\keywords{Nearly Kahler manifolds, Submanifolds, Twistor theory.}

\address{University of Brest, CNRS UMR 6205, LMBA, F-29238 Brest, France}
\email{guillaume.deschamps@univ-brest.fr, eric.loubeau@univ-brest.fr}

\begin{abstract}
    In this article, we show that a hypersurface of the nearly K{\"a}hler $\CP^3$ or $\F$ cannot have its shape operator and induced almost contact structure commute together.
    This settles the question for six-dimensional homogeneous nearly K{\"a}hler manifolds, as the cases of $\sn^6$ and $\sn^3 \times \sn^3$ were previously solved, and provides a counterpart to the more classical question for the complex space forms $\CP^n$ and $\CH^n$.
    The proof relies heavily on the construction of $\CP^3$ and $\F$ as twistor spaces of $\sq$ and $\CP^2$.
    \end{abstract}

\maketitle

\section*{Introduction}

The classical study of real hypersurfaces in complex space forms has led to extensive lists by Takagi~\cite{Takagi1,Takagi2} (for $\CP^n$) and Montiel~\cite{Montiel} (for $\CH^n$).

Driven by the number of principal curvatures and the importance of Hopf hypersurfaces, i.e. when the ambient complex structure maps the normal vector field to a principal direction, hypersurfaces of $\CP^n$ or $\CH^n$ where the shape operator $A$ and the induced almost contact structure $\varphi$ commute constitute a remarkable class, amenable to classification.

Indeed, by \cite[Th. 6.19]{Cecil2015} their principal curvatures must be constant and in twos or threes. Moreover, they must belong to type A of the Takagi-Montiel lists (cf. \cite[Th. 8.37]{Cecil2015} as well)

An almost Hermitian manifold $(Z,\mathbb J,g)$ is called nearly K{\"a}hler \cite{Gray1970} if $\nabla \mathbb J$ is anti-symmetric. The best-known (non-K{\"a}hler) example is the round sphere $\sn^6$ with its canonical metric and the structure that comes from octonion multiplication.

In view of the classical theory for complex space forms, it is natural to ask which hypersurfaces of nearly K{\"a}hler manifolds satisfy $A\varphi =\varphi A$.

Nearly K{\"a}hler manifolds enjoy many topological and geometric properties akin to K{\"a}hler geometry (cf.~\cite{Gray1976}) and have known a recent revival of interest with the structure theorem of Nagy~\cite{Nagy2002} in 2002, which shows that six-dimensional nearly K{\"a}hler manifolds act as building blocks, and Butruille's 2005 classification of homogeneous nearly K{\"a}hler six-manifolds~\cite{Butruille2005}, namely $\sn^6$, $\sn^3 \times \sn^3$, $\CP^3$ and $\F$.

While the explicit construction of the nearly K{\"a}hler structure (and metric) on $\sn^3 \times \sn^3$ is rather involved and ad-hoc, the $\CP^3$ and $\F$ examples both have their origin in twistor theory, as twistor spaces of $\sn^4$ and $\CP^2$.

In the case of the four-dimensional sphere, as its unitary frame bundle is $\SO(5)$, its twistor space is the associated bundle
$$
\SO(5) \times_{\SO(4)} \SO(4)/\U(2) \simeq \SO(5)/\U(2)
$$
which is $\CP^3$ and the twistor projection $\CP^3 \to \sn^4$ is $\Span_{\C} v \mapsto \Span_{\H} v$, where $\sn^4 \simeq \H\mathrm{P}^1$ by the Hopf map. When the spaces are equipped with their canonical metrics, this projection is a Riemannian submersion.

For the two-dimensional complex projective space, one considers $y\in \CP^2$ and $(x,y,z)$ mutually orthogonal complex lines in $\C^3$. Identifying a complex structure in $T_{y} \CP^2$ with a choice of holomorphic and anti-holomorphic bundles, one shows that
\begin{equation}\label{hom1}
    T^{1,0}_{y} \CP^2 = \Hom(y,x) \oplus \Hom(y,z)
\end{equation}
and 
\begin{equation}\label{hom2}
    T^{0,1}_{y} \CP^2 = \Hom(x,y) \oplus \Hom(z,y) .
\end{equation}
 There is a one-one correspondence between triples $(x,y,z)$ and couples $(l,p)$, where $l$ is a complex line and $p$ a complex plane in $\C^3$ with $l \subset p$, i.e. the flag manifold $\F$.
 
 Since $\CP^2$ is self-dual \cite{Atiyah}, the integrable almost Hermitian structure is defined by taking the standard Hermitian structure on $T^{1,0}_{y} \CP^2$ and its opposite on $T^{0,1}_{y} \CP^2 $. Because of this orientation reversal, this identifies $\F$ with $Z(\overline{\CP^2})$, and the twistor projection $(l\subset p) \in \F \mapsto l^\perp \cap p \in \CP^2$ is also a Riemannian submersion.

 There is a general procedure, due to \cite{Eells-Salamon} and \cite{Nagy2002}, to produce nearly K{\"a}hler manifolds:
 If $(Z,\Io , g_1)$ is a K{\"a}hler manifold with a Riemannian foliation $\mathcal{F}$, which induces an ($\Io$-invariant) integrable distribution $\vv$ and its orthogonal complement $\hh$, then the Riemannian metric
 $$
 g_2 (X,Y) = \frac12 g_1 (X,Y) \quad \forall X,Y \in \vv
 $$
 and
  $$
 g_2 (X,Y) =  g_1 (X,Y) \quad \forall X,Y \in \hh
 $$
 together with the almost complex structure
 $$
 \It X = - \Io X \quad \forall X \in \vv \qquad \text{and} \qquad 
 \It X =  \Io X \quad \forall X\in \hh
 $$
 make $(Z,\It ,g_2)$ into a nearly K{\"a}hler manifold.

 According to Hitchin~\cite{Hitchin}, $\CP^3$ and $\F$ are the only compact twistor spaces $(Z,\Io , g_1)$ to be K{\"a}hler. Therefore, if $(Z,\It , g_2)$ is nearly K{\"a}hler then $Z$ is $\CP^3$ or $\F$.

Let $(Z,\It,g)$ be a nearly K{\"a}hler manifold and $H \hookrightarrow Z$ a hypersurface. Call $N$ the unit normal to $H$ and then define an almost contact (metric) structure $\varphi$ on $H$ by:
$$\varphi X = \It X - g(\It X,N)N, \quad \forall X\in TH.$$
One easily verifies that 
$$ g(\varphi X, \varphi Y) = g(X,Y)\quad \forall X,Y\in TH\cap (\It N)^\perp,$$
or more generally
$$ g(\varphi X, Y) = g(X, \varphi Y)\quad \forall X,Y\in TH,$$
as well as
$$ \varphi (\It N)=0.$$
The other fundamental tensor is the shape operator $A$ of $H$:
$$ AX = - \nabla_{X}^{Z} N ,$$
so that
$$ \nabla_{X}^{Z} Y = \nabla_{X}^{H} Y + g(AX,Y)N .$$

An immediate remark on hypersurfaces which satisfy $A\varphi =\varphi A$ is that the Hopf vector field $\It N$ has to be an eigenvector of $A$ (of eigenvalue $\mu$), so that $H$ is a Hopf hypersurface, and the eigenspaces of $A|_{(\It N)^{\perp}}$ must be $\It$-stable.

In dimension $6$, since we have a full classification of homogeneous nearly K{\"a}hler~\cite{Butruille2005}, the first two cases of the list, $\sn^6$ and $\sn^3 \times \sn^3$, have already been investigated.

For $\sn^6$, we combine \cite[Proposition 1]{Berndt1995}, which shows that the Hopf vector field must have geodesic integral curves, arguments from Lemma~\ref{lemma2} to deduce that it is in fact a Killing vector field and conclude with \cite[Theorem 2]{Martins2001} that the only hypersurfaces of $\sn^6$ with $A\varphi =\varphi A$ are (open parts of) geodesic spheres.

For the nearly K{\"a}hler $\sn^3 \times \sn^3$, that is equipped with the right metric and almost complex structure, its hypersurfaces with $A\varphi =\varphi A$ must be locally given by the canonical immersion of $\sn^2 \times \sn^3$ in $\sn^3 \times \sn^3$ (\cite[Theorem 1.3]{Hu2017}). Note that this  classification contains three immersions but, by \cite[p. 155]{Moruz2018}, they turn out to be all isometric one to the other.

There exists an almost contact counterpart to the nearly K{\"a}hler condition, coined nearly cosymplectic (and defined by $\nabla\varphi$ being antisymmetric). By \cite{Blair1971}, they must satisfy $A\varphi =\varphi A$ but, while $\sn^5 \hookrightarrow \sn^6$ is well-known to be nearly cosymplectic, the hypersurface $\sn^2 \times \sn^3 \hookrightarrow\sn^3 \times \sn^3$ is not, as a quick inspection of the eigenvalues of its shape operator reveals.

The objective of this article is to extend these results to the remaining two homogeneous nearly K{\"a}hler six-manifolds and prove the following theorem.
\begin{theoreme*}
Let $Z(M)$ be the nearly K{\"a}hler manifold $\CP^3$ or $\F$. Then there exists no hypersurface $H \hookrightarrow  Z(M)$ such that its shape operator $A$ and the induced almost contact structure $\varphi$ commute: 
$$A\varphi =\varphi A .$$
\end{theoreme*}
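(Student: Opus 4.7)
My starting point is the remark already made in the introduction: $A\varphi=\varphi A$ forces $\xi:=\It N$ to be an eigenvector of $A$, say $A\xi=\mu\xi$, and the four-dimensional $\It$-invariant contact distribution $\mathcal D:=\xi^{\perp}\cap TH$ to split into $\It$-stable eigenspaces of $A$. In dimension five this confines the spectrum of $A$ to the form $(\mu;\lambda,\lambda,\nu,\nu)$, with $\lambda=\nu$ allowed, so there is no freedom left other than the positions and behaviour of these eigenspaces.

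The approach is to exploit the twistor fibration $\pi\colon Z\to M$. Decompose the unit normal as $N=N^{\vv}+N^{\hh}$ with respect to $TZ=\vv\oplus\hh$. Because $\It=-\Io$ on $\vv$ and $\It=\Io$ on $\hh$, one has $\xi=-\Io N^{\vv}+\Io N^{\hh}$, so the Hopf condition ties the splitting of $N$ to the geometry of $\pi$. It is natural to dichotomise according to whether $N$ is purely vertical, purely horizontal, or mixed. If $N^{\vv}=0$ the fibres of $\pi$ are tangent to $H$, so $H=\pi^{-1}(S)$ for a hypersurface $S\subset M$; the $\It$-stable eigenspace decomposition of $A|_{\mathcal D}$ has then to be compatible with the vertical two-planes sitting inside $\mathcal D$ and with the curvature of $\sq$ or $\CP^2$, and one expects this to fail. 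If $N^{\hh}=0$, $\xi$ sits inside a twistor fibre and the $\It$-invariance of the contact eigenspaces should clash with the nearly K\"ahler torsion of $\It$ along the $\sn^2$-fibre.

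The substantive work is the mixed case. The main tools are the Gauss and Codazzi equations for $H\hookrightarrow(Z,g_2)$ combined with the nearly K\"ahler identity $(\nabla_X^{Z}\It)X=0$ and the explicit form of $R^Z$ coming from O'Neill's formulas for the K\"ahler submersion $(Z,\Io,g_1)\to M$ (it is simpler to compute in the K\"ahler model and translate via the metric rescaling on $\vv$). Plugging $\xi$ and the $\It$-paired eigenvectors into Codazzi and then projecting onto $\vv$ and $\hh$ yields an overdetermined system on $\mu,\lambda,\nu$, on the ratio $|N^{\vv}|/|N^{\hh}|$, and on the eigendistributions of $A|_{\mathcal D}$; the $\It$-stability constraint forces further algebraic identities through the sign discrepancy between $\It$ and $\Io$. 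The main obstacle I anticipate is the bookkeeping needed when $\nabla\It$ couples to the O'Neill $A$-tensor of the non-integrable distribution $\hh$; the hoped-for contradiction should come uniformly for $Z=\CP^3$ and $Z=\F$ by exploiting how this sign discrepancy propagates through the structure equations.
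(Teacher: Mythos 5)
Your opening observations are correct and coincide with the paper's starting point, and the trichotomy on $N=N^{v}+N^{h}$ (vertical, horizontal, mixed) is exactly the organising principle of the published proof. But what you have written is a plan rather than a proof: in all three cases the contradiction is left as an expectation (``one expects this to fail'', ``should clash'', ``the hoped-for contradiction should come''), and the quantitative inputs that actually produce those contradictions are absent. Concretely: (i) the vertical and mixed cases are killed by combining the Codazzi symmetry $R(\It N,X,\It X,N)=-R(\It N,\It X,X,N)$, valid for an eigenvector $X\in E_\lambda\cap(\It N)^{\perp}$, with the \emph{explicit} curvature tensor of $(Z(M),g_{6/s})$ expressed through $R^{M}$ and the horizontal/vertical projections (the Apostolov--Grantcharov--Ivanov formula); without that explicit formula the ``overdetermined system'' you invoke cannot even be written down. (ii) The horizontal case is not handled by Codazzi at all: one needs the twistor-theoretic identification $[X,N]^{v}=\widehat{R^{M}(X\wedge N)}$, which converts the O'Neill integrability tensor into base curvature and produces an off-diagonal block $F$ of $A$, coupling the horizontal directions $e_3,e_4$ to the fibre directions $J^{+},K^{+}$, which manifestly cannot commute with $\varphi$. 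You mention O'Neill's formulas in general terms but not this identification, and it is the whole content of that case.

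The second gap is your hope that the argument ``should come uniformly'' for $\CP^{3}$ and $\F$. It does not. For $\CP^{3}$ the mixed case collapses at once: the constant curvature of $\sq$ forces $g^{h}(X,N)=g^{h}(\It X,N)=0$ for every eigenvector orthogonal to $N$ and $\It N$, hence $N^{h}=0$, reducing to the already-excluded vertical case. For $\F$ the mixed case is the hard core of the proof: the holomorphic curvature of $\CP^{2}$ only constrains $d\pi\bigl(E_\lambda\cap(\It N)^{\perp}\bigr)$ to lie in explicit two-planes indexed by the roots $\delta_{\pm}$ of a quadratic, and the contradiction requires the further nearly K\"ahler identity $\Vert(\nabla_X\It)N\Vert^2=R(X,N,X,N)+R(\It N,\It X,X,N)=\Vert X\Vert^{2}$, evaluated on a fully reconstructed eigenvector (vertical part included), which finally yields $\delta_{+}^{2}\Vert N^{h}\Vert^{4}\Vert N^{v}\Vert^{4}=0$, impossible since $\delta_{+}\neq 0$. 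This Gray identity is nowhere in your toolkit, and Codazzi plus O'Neill alone will not close that case.
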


A direct consequence is that this construction produces only one example of nearly cosymplectic almost contact hypersurface.

\begin{corollary}
The only nearly cosymplectic hypersurface of a homogeneous $6$-dimensional nearly K{\"a}hler manifold is $\sn^5 \hookrightarrow \sn^6$.
\end{corollary}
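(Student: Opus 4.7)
The plan is to assume, for contradiction, the existence of a hypersurface $H \hookrightarrow Z(M)$ satisfying $A\varphi = \varphi A$, and to exploit the twistor structure $\pi : Z(M) \to M$ (with $M = \sq$ or $\CP^2$) to derive a contradiction. Recall that the nearly K{\"a}hler structure $\It$ is obtained from the K{\"a}hler structure $\Io$ by reversing it on the $2$-dimensional vertical distribution $\vv$ (tangent to the $\CP^1$ fibers) and keeping it on the $4$-dimensional horizontal distribution $\hh$. I would first extract the standard consequences of $A\varphi = \varphi A$ already noted in the excerpt: the Hopf vector field $\xi := \It N$ is principal for $A$ with some eigenvalue $\mu$, so $H$ is a Hopf hypersurface, and the restriction $A|_{D}$ to the contact distribution $D = \xi^\perp \cap TH$ splits into $\It$-stable eigenspaces, each of even dimension inside the $4$-dimensional $D$.

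Next I would decompose the unit normal into its vertical and horizontal parts $N = aN^\vv + bN^\hh$ with $a^2 + b^2 = 1$, inducing the corresponding decomposition $\xi = -a\,\Io N^\vv + b\,\Io N^\hh$. The two extreme loci where $N$ is purely vertical or purely horizontal should be handled first. When $N$ is vertical, the twistor fiber through each point of $H$ is tangent to $H$ along a $1$-dimensional subdistribution necessarily containing $\xi$, and the explicit description of $\It$ on the fiber $\CP^1$ severely constrains $\mu$; when $N$ is horizontal, I would instead use the $\It$-invariance of $\hh$, together with the fact that $H$ then projects onto a hypersurface of $M$ whose second fundamental form relates to that of $H$ via the O'Neill tensor of $\pi$, and compare the resulting principal curvatures. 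On the generic locus both components are nonzero, and this is where the bulk of the work lies.

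The main engine in the generic case is the Codazzi equation $(\nabla^H_X A)Y - (\nabla^H_Y A)X = (R^Z(X,Y)N)^\top$, combined with the explicit curvature tensor of the nearly K{\"a}hler $Z(M)$ coming from the twistor construction, the nearly K{\"a}hler identity $(\nabla^Z_X \It)X = 0$, and the O'Neill-type formulas expressing $\nabla^Z \It$ in terms of vertical/horizontal projections. Evaluating these identities on vectors taken from the $\It$-stable $A$-eigenspaces should yield an overdetermined algebraic system on the principal curvatures and on the function $(a,b)$, ultimately incompatible with the rigidity of the twistor curvature. The main obstacle I anticipate is bookkeeping through the mixed case: $\xi$ is then split into vertical and horizontal pieces on which $\It$ relates to $\Io$ with opposite signs, so the interaction of $A$ with both the twistor fibers and their orthogonal complement must be handled simultaneously; moreover the two ambient cases, $\CP^3 \to \sq$ and $\F \to \CP^2$, may need to be distinguished since their base curvatures, and hence the horizontal contributions to $R^Z$, differ substantially.
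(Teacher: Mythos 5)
Your proposal proves the wrong statement: what you sketch is a strategy for the paper's main Theorem (non-existence of hypersurfaces of $\CP^3$ or $\F$ with $A\varphi=\varphi A$), whereas the statement at hand is the Corollary that $\sn^5 \hookrightarrow \sn^6$ is the \emph{only} nearly cosymplectic hypersurface of \emph{any} homogeneous six-dimensional nearly K\"ahler manifold. The Corollary is obtained by assembling four ingredients, three of which are absent from your proposal: (i) Blair's theorem that a nearly cosymplectic hypersurface automatically satisfies $A\varphi=\varphi A$ --- this is the only bridge between the hypothesis ``nearly cosymplectic'' and the commutation condition you actually work with, and you never invoke it; (ii) Butruille's classification, which reduces the ambient space to one of $\sn^6$, $\sn^3\times\sn^3$, $\CP^3$, $\F$; (iii) the previously known classifications for the first two cases: in $\sn^6$ the hypersurfaces with $A\varphi=\varphi A$ are open parts of geodesic spheres, of which only $\sn^5$ is nearly cosymplectic, and in $\sn^3\times\sn^3$ the only such hypersurface is $\sn^2\times\sn^3$, which fails to be nearly cosymplectic by inspection of the eigenvalues of its shape operator. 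Without (i) your argument never engages the actual hypothesis, and without (ii)--(iii) you cannot conclude uniqueness of $\sn^5\hookrightarrow\sn^6$ even after settling $\CP^3$ and $\F$.

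Concerning the part you do sketch, it is broadly aligned with the paper's proof of the Theorem (Codazzi together with the nearly K\"ahler identity, the explicit twistor curvature formula for $g_t$, O'Neill's formulas, and a case analysis on the vertical/horizontal decomposition of $N$), but it remains a plan rather than a proof. The decisive steps --- the identity $R(\It N,X,\It X,N)=-\lambda(\lambda-\mu)\Vert X\Vert^2$ for eigenvectors $X$, the use of Gray's identity $\Vert(\nabla_X\It)N\Vert^2=R(X,N,X,N)+R(\It N,\It X,X,N)$ to exclude the mixed case for $\F$, and the identification $(AX)^v=-\tfrac12\,\reallywidehat{R^M(X\wedge N)}$ that produces the off-diagonal block contradicting $A\varphi=\varphi A$ when $N$ is horizontal --- are not carried out, and the claim that the resulting system is ``ultimately incompatible with the rigidity of the twistor curvature'' is a hope, not an argument.
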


As a byproduct of the Theorem, we obtain information on the eigenvalues of the shape operator $A$.
\begin{corollary}
There is no totally geodesic or totally umbilical hypersurface of the nearly K{\"a}hler manifolds $\CP^3$ or $\F$.
\end{corollary}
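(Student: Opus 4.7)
The plan is to derive the corollary as a purely formal consequence of the Theorem. The key observation is that both totally geodesic and totally umbilical hypersurfaces automatically satisfy the commutation relation $A\varphi=\varphi A$, so the Theorem immediately forbids their existence in $\CP^3$ and $\F$. No geometric input beyond the Theorem itself is needed.

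More precisely, recall that a hypersurface $H \hookrightarrow Z(M)$ is totally geodesic when its shape operator vanishes identically, $A\equiv 0$, and totally umbilical when $A = \lambda\cdot \id_{TH}$ for some smooth function $\lambda$ on $H$; the totally geodesic case corresponds to $\lambda\equiv 0$ and is therefore subsumed by the totally umbilical one. In the umbilical case, for every $X\in TH$,
$$A\varphi X = \lambda\,\varphi X = \varphi(\lambda X) = \varphi A X,$$
so $A\varphi = \varphi A$ holds on all of $TH$. Such a hypersurface would hence provide a forbidden example in the sense of the Theorem, whence the contradiction.

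There is essentially no obstacle in this argument: once the Theorem is established, the corollary follows in one line from the very definitions of total geodesicity and total umbilicity, together with the fact that a scalar multiple of the identity commutes with any endomorphism of $TH$. The actual difficulty lies entirely in the proof of the Theorem, which relies on the explicit twistor descriptions of $\CP^3$ over $\sq$ and $\F$ over $\CP^2$.
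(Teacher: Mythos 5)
Your argument is correct and is exactly the one the paper intends: the corollary is stated as an immediate byproduct of the Theorem, since $A=\lambda\,\id$ (with $\lambda\equiv 0$ in the totally geodesic case) trivially commutes with $\varphi$, which the Theorem forbids. The paper gives no further proof, so your one-line deduction matches its approach.
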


The authors thank the anonymous referees for their careful reading of the manuscript and valuable suggestions.

\section{Curvature properties of nearly K{\"a}hler $\CP^3$ and $\F$}

Throughout the rest of this article, we specialize to the cases $M=\sq$ and $M=\overline{\CP^2}$. Let $Z(M)$ be the twistor space of $M$, equiped with the Riemannian metric \cite{Apostolov98} (see also \cite{Friedrich} for an earlier reference)
$$g_t = \pi^* g_M + t g_{\CP^1}, \quad (t>0) .$$
Two almost complex structures can be defined on $Z(M)$: First the Atiyah-Hitchin-Singer structure $\Io$ on $T_{(x_0,I)} Z(M)$, with $x_0 \in M$ and $I$ a complex structure on 
$T_{x_0} M$, defined by
$$
\left\{\begin{array}{ccc}
\Io X &=& IX \quad \text{if } X\in \hh\\
\Io Y&=&  \mathbb{J}_{\C\mathrm{P}^1} Y,\quad \text{if } Y\in \vv ,
\end{array}\right.
$$
where we identify vectors tangent to $M$ with their horizontal lifts in $\hh \subset T_{(x_0,I)} Z(M)$;
Second the Eells-Salamon structure~\cite{Eells-Salamon}:
$$
\left\{\begin{array}{ccc}
\It &=& \Io \quad \text{on } \hh\\
\It &=& - \Io \quad \text{on } \vv.
\end{array}\right.
$$

Then, as the cases we consider are anti-self dual, \cite{Friedrich} shows that $(Z(M), g_t, \Io)$ is a K{\"a}hler manifold for $t= \frac{12}{s}$, ($s = \scal_{(M,g_M)}$), while~\cite{Mush} prove that $(Z(M),g_t, \It)$ is  nearly K{\"a}hler for $t= \frac{6}{s}$.

The next proposition relates the curvature tensors of the twistor space and the base manifold, in terms of the nearly K{\"a}hler structure. This will lead to crucial curvature properties in Lemma~\ref{lemma2}.
\begin{proposition}\label{prop1}
Let $Z(M)$ be the twistor space of $\sq$ or $\CP^2$. Write $g= g_{\tfrac{6}{s}}$ so that $(Z(M),\It, g)$ is nearly K{\"a}hler and denote by $R$ and $R^M$ the respective curvature tensors of $(Z(M),g)$ and $(M,g_M)$.

Let $X,Y,Z,T \in T_p Z(M)$ ($p\in Z(M)$) then 

\begin{align*}
&R(X,Y,Z,T)=R^M\Big(d\pi(X),d\pi(Y),d\pi(Z),d\pi(T)\Big)\\
&+2(b+2a)g^h(\mathbb J_2X,Y)g^h(\mathbb J_2Z,T)
+(b+2a)g^h(\mathbb J_2X,Z)g^h(\mathbb J_2Y,T)\\
&-(b+2a)g^h(\mathbb J_2X,T)g^h(\mathbb J_2Y,Z)
+(c-5b)\Big(g^h(X,Z)g^h(Y,T)-g^h(X,T)g^h(Y,Z)\Big)\\
&-2ag^h(\mathbb J_2X,Y)g(\mathbb J_2 Z,T)
-2a g(\mathbb J_2X,Y)g^h(\mathbb J_2Z,T)
-ag^h(\mathbb J_2X,Z)g(\mathbb J_2Y,T)\\
&-ag(\mathbb J_2X,Z)g^h(\mathbb J_2Y,T)
+ag^h(\mathbb J_2X,T)g(\mathbb J_2Y,Z)+ag(\mathbb J_2X,T)g^h(\mathbb J_2Y,Z)\\
&+(b-c)\Big(g^h(X,Z)g(Y,T)+g(X,Z)g^h(Y,T) -g^h(X,T)g(Y,Z)-g(X,T)g^h(Y,Z)\Big)\\
&+c\Big(g(X,Z)g(Y,T)-g(X,T)g(Y,Z)\Big),
\end{align*}
where 
\begin{align*}
a&= \frac{s}{24}-t\Big(\frac{s}{24}\Big)^2 ; b= t\Big(\frac{s}{24}\Big)^2 ;c= \frac1t ,
\end{align*}
with $t = \frac{6}{s}$
\end{proposition}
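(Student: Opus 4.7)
The plan is to derive the formula from O'Neill's submersion identities for $\pi \colon (Z(M),g_t) \to (M,g_M)$, combined with Friedrich's theorem that $(Z(M),\Io,g_{12/s})$ is K\"ahler--Einstein. The fibers of $\pi$ are totally geodesic copies of $\CP^1$ carrying the rescaled round metric $t\,g_{\CP^1}$, so O'Neill's $T$-tensor vanishes, and only the pulled-back base curvature $R^M$, the intrinsic fiber curvature, and the integrability tensor $A^\pi$ contribute. Since $M$ is Einstein and anti-self-dual, and since all objects are equivariant under the natural symmetry group, these three ingredients should reassemble into a formula written entirely in terms of $R^M$, the metrics $g$ and $g^h$, and the almost complex structure $\It$.

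I would carry out the computation case by case on the horizontal/vertical type of $X,Y,Z,T$. For purely horizontal inputs, O'Neill gives $R^M(d\pi X,\dots)$ plus a quadratic expression in $A^\pi_X Y = \tfrac12[X,Y]^v$; the twistorial meaning of $\vv$ is that this vertical bracket records the infinitesimal rotation of the complex structure along horizontal directions, and for self-dual $M$ it is captured by $\It$ acting on $\hh$. That identification is the source of the $(b+2a)$-terms. For purely vertical inputs, $R$ reduces to the intrinsic curvature of the 2-sphere fiber of area proportional to $t$, which contributes the constant-sectional-curvature block with coefficient $c=1/t$. For mixed-type inputs, only the $A^\pi$-couplings survive, and they generate the terms with coefficients $-2a$, $-a$, and $b-c$, where each $\It$-factor turns into $g(\It\cdot,\cdot)$ or $g^h(\It\cdot,\cdot)$ depending on the horizontal/vertical type of its two arguments.

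The constants $a,b,c$ are then pinned down by two independent checks. First, at $t=12/s$ the structure $(Z(M),\Io,g_{12/s})$ is K\"ahler--Einstein and its Ricci/scalar invariants are classical; comparing these to the trace of $R$ produced by O'Neill determines $a$ and $b$ as the explicit polynomials in $s$ and $t$ stated in the proposition. Second, the intrinsic vertical curvature directly yields $c=1/t$. The sign flip between $\Io$ and $\It$ on $\vv$ is responsible for the asymmetry between $g$ and $g^h$ in the $\mathbb J_2$-factors: each vertical insertion reverses a sign, converting a K\"ahler-picture term $g(\Io\cdot,\cdot)g(\Io\cdot,\cdot)$ into one of the mixed $g\,g^h$ or $g^h g^h$ combinations appearing in the statement. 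The main obstacle will be the bookkeeping of the eight distinct horizontal/vertical configurations of $(X,Y,Z,T)$; I would handle it by evaluating both sides on a basis $\{e_1,\dots,e_4,\xi_1,\xi_2\}$ adapted to $\hh\oplus\vv$ with $\xi_2=\It\xi_1$, reducing the proposition to a finite symbolic verification.
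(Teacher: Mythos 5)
Your plan and the paper's proof diverge at the very first step: the paper does not re-derive the twistor curvature from O'Neill's equations at all. It quotes the explicit formula of Apostolov--Grantcharov--Ivanov \cite{Apostolov98} for $R$ in terms of $R^M$, the K\"ahler structure $\Io$ and the constants $a,b,c$, and then performs two purely algebraic substitutions: $\Io=\It$ on $\hh$ and $\Io=-\It$ on $\vv$ (so every factor $g^v(\Io\,\cdot\,,\cdot)$ picks up a sign), followed by $g^v=g-g^h$. The entire proof is a reorganisation of terms. Your proposal amounts to re-proving the cited formula from scratch, which is legitimate in principle but, as written, leaves genuine gaps.

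First, the heart of such a derivation is the explicit computation of the integrability tensor $A^\pi_XY=\tfrac12[X,Y]^v=\tfrac12\reallywidehat{R^M(X\wedge Y)}$ and of the contractions $g(A^\pi_XY,A^\pi_ZT)$ that O'Neill's equations produce for horizontal arguments. These are quadratic in the curvature operator of $M$, and reducing them to the $g^h(\It\,\cdot\,,\cdot)$-combinations of the statement requires using the Einstein and anti-self-dual hypotheses in a concrete computation; asserting that the vertical bracket ``is captured by $\It$ acting on $\hh$'' skips exactly the step that produces the coefficients. Second, your mechanism for pinning down $a$ and $b$ does not work: comparing Ricci or scalar invariants of the K\"ahler--Einstein metric at $t=12/s$ yields at most a couple of scalar equations about a metric different from the one in the proposition (which is $g_{6/s}$), and scalar invariants cannot determine the many independent coefficients of a full curvature tensor. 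Third, the concluding ``finite symbolic verification on a basis'' presupposes an independent closed form for $R$ against which to check the right-hand side, but producing that closed form is precisely the content of the proposition. If you want to avoid citing \cite{Apostolov98}, you must actually carry out the O'Neill computation in each horizontal/vertical configuration; if you are willing to cite it, the proof reduces to the sign flip on $\vv$ and the substitution $g^v=g-g^h$, as in the paper.
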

\begin{proof}
We rely on the formula of \cite{Apostolov98}:
\begin{align*}
&R(X,Y,Z,T)=R^M\Big(d\pi(X),d\pi(Y),d\pi(Z),d\pi(T)\Big)\\
&+2ag^h(\mathbb J_1X,Y)g^v(\mathbb J_1Z,T)+2ag^v(\mathbb J_1X,Y)g^h(\mathbb J_1Z,T)
+ag^h(\mathbb J_1X,Z)g^v(\mathbb J_1Y,T)\\
&+ag^v(\mathbb J_1X,Z)g^h(\mathbb J_1Y,T)
-ag^h(\mathbb J_1X,T)g^v(\mathbb J_1Y,Z)-ag^v(\mathbb J_1X,T)g^h(\mathbb J_1Y,Z)\\
&+2bg^h(\mathbb J_1X,Y)g^h(\mathbb J_1Z,T)+bg^h(\mathbb J_1X,Z)g^h(\mathbb J_1Y,T)
-bg^h(\mathbb J_1X,T)g^h(\mathbb J_1Y,Z)\\
&+bg^h(X,Z)g^v(Y,T)+bg^v(X,Z)g^h(Y,T)
-bg^h(X,T)g^v(Y,Z)-bg^v(X,T)g^h(Y,Z)\\
&-3bg^h(X,Z)g^h(Y,T)+cg^v(X,Z)g^v(Y,T)
+3bg^h(X,T)g^h(Y,Z)-cg^v(X,T)g^v(Y,Z),
\end{align*}
where $\mathbb J_1$ is the K{\"a}hler structure on $Z(M)$. 

Since $\Io$ and $\It$ agree on the horizontal distribution and are opposite on $\vv$, we have

\begin{align*}
&R(X,Y,Z,T)=R^M\Big(d\pi(X),d\pi(Y),d\pi(Z),d\pi(T)\Big)\\
&-2ag^h(\mathbb J_2X,Y)g^v(\mathbb J_2Z,T)-2ag^v(\mathbb J_2X,Y)g^h(\mathbb J_2Z,T)
-ag^h(\mathbb J_2X,Z)g^v(\mathbb J_2Y,T)\\
&-ag^v(\mathbb J_2X,Z)g^h(\mathbb J_2Y,T)
+ag^h(\mathbb J_2X,T)g^v(\mathbb J_2Y,Z)+ag^v(\mathbb J_2X,T)g^h(\mathbb J_2Y,Z)\\
&+2bg^h(\mathbb J_2X,Y)g^h(\mathbb J_2Z,T)
+bg^h(\mathbb J_2X,Z)g^h(\mathbb J_2Y,T)
-bg^h(\mathbb J_2X,T)g^h(\mathbb J_2Y,Z)\\
&+bg^h(X,Z)g^v(Y,T)+bg^v(X,Z)g^h(Y,T)
-bg^h(X,T)g^v(Y,Z)-bg^v(X,T)g^h(Y,Z)\\
&-3bg^h(X,Z)g^h(Y,T)+cg^v(X,Z)g^v(Y,T)
+3bg^h(X,T)g^h(Y,Z)-cg^v(X,T)g^v(Y,Z).
\end{align*}

We use the shorthand $g^h(X,Y) = g(X^h , Y^h)$ and $g^v(X,Y) = g(X^v , Y^v)$ for $X= X^h + X^v$ its decomposition in the horizontal and vertical distributions.
Since $g(X,Y) = g^h(X,Y) + g^v(X,Y)$, we have 
\begin{align*}
&R(X,Y,Z,T)=R^M\Big(d\pi(X),d\pi(Y),d\pi(Z),d\pi(T)\Big)\\
&+4ag^h(\mathbb J_2X,Y)g^h(\mathbb J_2Z,T)+2ag^h(\mathbb J_2X,Z)g^h(\mathbb J_2Y,T)
-2ag^h(\mathbb J_2X,T)g^h(\mathbb J_2Y,Z)\\
&-2ag^h(\mathbb J_2X,Y)g(\mathbb J_2 Z,T)-2a g(\mathbb J_2X,Y)g^h(\mathbb J_2Z,T)
-ag^h(\mathbb J_2X,Z)g(\mathbb J_2Y,T)\\
&-ag(\mathbb J_2X,Z)g^h(\mathbb J_2Y,T)
+ag^h(\mathbb J_2X,T)g(\mathbb J_2Y,Z)+ag(\mathbb J_2X,T)g^h(\mathbb J_2Y,Z)\\
&+2bg^h(\mathbb J_2X,Y)g^h(\mathbb J_2Z,T)
+bg^h(\mathbb J_2X,Z)g^h(\mathbb J_2Y,T)
-bg^h(\mathbb J_2X,T)g^h(\mathbb J_2Y,Z)\\
&-2bg^h(X,Z)g^h(Y,T)+2bg^h(X,T)g^h(Y,Z)
+bg^h(X,Z)g(Y,T)+bg(X,Z)g^h(Y,T)\\
&-bg^h(X,T)g(Y,Z)-bg(X,T)g^h(Y,Z)+(c-3b)g^h(X,Z)g^h(Y,T)\\
&-(c-3b)g^h(X,T)g^h(Y,Z)\\
&+c(g(X,Z)g(Y,T)-g(X,Z)g^h(Y,T)-g^h(X,Z)g(Y,T))\\
&-c(g(X,T)g(Y,Z)-g(X,T)g^h(Y,Z)-g^h(X,T)g(Y,Z)), 
\end{align*}
and reorganising terms yields the proposition.
\end{proof}

Let $H \hookrightarrow Z(M)$ be a hypersurface of $(Z(M),\It,g)$ satisfying
\begin{equation}\label{eq-comm}
    A\varphi = \varphi A.
\end{equation}
We call $N$ the normal to $H$ and Equation~\eqref{eq-comm} implies that $\It N$ is an eigenvector of $A$ (of eigenvalue $\mu$). We denote by $\lambda$ an eigenvalue of $A$ and observe that the eigenspace $E_\lambda\cap (\mathbb J_2 N)^\perp$ is  $\It$-invariant.

\begin{lemma}\label{lemma2}
Let $X\in E_\lambda \cap (\mathbb J_2 N)^\perp$ then 
$$R(\It N,X,\It X,N) = - R(\It N,\It X,X,N),$$
and $R(\It N,X,\It X,N) = -\lambda (\lambda - \mu) \Vert X \Vert^2$.
\end{lemma}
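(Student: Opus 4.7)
The plan is to apply the Codazzi equation for the embedding $H \hookrightarrow Z(M)$,
\begin{align*}
R(\It N, X, \It X, N) = g\bigl((\nabla_{\It N} A) X, \It X\bigr) - g\bigl((\nabla_X A) \It N, \It X\bigr),
\end{align*}
and simplify each of the two right-hand terms using the hypothesis $A\varphi = \varphi A$ together with the defining identities of the nearly K\"ahler structure. Commutativity forces $E_\lambda \cap (\It N)^\perp$ to be $\It$-stable, hence $A\It X = \lambda \It X$. Extending $X$ locally as a section of $E_\lambda$, the expansion $(\nabla_{\It N}A)X = (\It N)(\lambda) X + \lambda \nabla_{\It N} X - A \nabla_{\It N} X$ pairs with $\It X$ to give zero, since $g(X, \It X) = 0$ and the last two pieces cancel after moving $A$ across the inner product.

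For the second term, locally extending $\It N$ so that $A \It N = \mu \It N$ yields $\nabla^H_X(A \It N) = X(\mu) \It N + \mu \nabla^H_X \It N$; the scalar term disappears against $\It X$ because $g(\It N, \It X) = g(N, X) = 0$, and what remains collapses to
\begin{align*}
g\bigl((\nabla_X A) \It N, \It X\bigr) = (\mu - \lambda)\, g\bigl(\nabla^H_X \It N, \It X\bigr).
\end{align*}
I would then pass to the ambient connection (the normal correction $g(AX, \It N) N = \mu\, g(X, \It N) N$ vanishes) and apply Weingarten to write
\begin{align*}
\nabla^H_X \It N = \nabla^Z_X(\It N) = (\nabla^Z_X \It) N - \lambda \It X.
\end{align*}

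The key input from the nearly K\"ahler structure is the identity $(\nabla_X \It) X = 0$. Differentiating $\It^2 = -\id$ gives $(\nabla_X \It)\It = -\It(\nabla_X \It)$, whence $(\nabla_X \It)(\It X) = 0$; combined with the skew-symmetry of $\nabla \It$ this produces
$g((\nabla^Z_X \It) N, \It X) = -g(N, (\nabla^Z_X \It)(\It X)) = 0$. Hence $g(\nabla^H_X \It N, \It X) = -\lambda \|X\|^2$, and reassembling delivers the second asserted formula $R(\It N, X, \It X, N) = -\lambda(\lambda - \mu)\|X\|^2$. The first identity then follows from the second upon replacing $X$ by $\It X \in E_\lambda \cap (\It N)^\perp$, noting $\It(\It X) = -X$ and $\|\It X\| = \|X\|$. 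The main subtlety lies in justifying the local smooth extensions of $X$, $\It N$, $\lambda$, $\mu$; because $(\nabla_X A)$ is tensorial and only pointwise values enter, this is harmless at points where the eigenspaces $E_\lambda$ and $E_\mu$ have locally constant dimension.
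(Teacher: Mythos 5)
Your proof is correct and follows essentially the same route as the paper: Codazzi reduces everything to $(\lambda-\mu)\,g(\nabla_X \It N,\It X)$, and the nearly K\"ahler skew-symmetry of $\nabla\It$ gives $g(\nabla_X \It N,\It X)=-\lambda\Vert X\Vert^2$, with the first identity obtained by substituting $\It X$ for $X$. The only (cosmetic) difference is that the paper evaluates $g(\nabla_X\It N,\It X)$ by expanding $g(AX,X)=-g(\nabla_X N,X)$ with $N=-\It(\It N)$, whereas you expand $\nabla_X(\It N)$ directly via Leibniz and Weingarten; both rest on the same identity $(\nabla_X\It)(\It X)=0$.
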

\begin{proof}
Since both $X$ and $\It X$ belong to $E_\lambda$, the Codazzi Equation gives
\begin{align*}
    R(\It N,X,\It X,N) &= g((\nabla_{\It N}A)(X)-(\nabla_{X} A)(\It N),\It X)\\
    &= -g((\nabla_{X} A)(\It N),\It X)
\end{align*}
since $A|_{E_\lambda} = \lambda \id_{E_\lambda}$.

Therefore
$$ R(\It N,X,\It X,N) = (\lambda - \mu) g(\nabla_X \It N, \It X),$$
which is $\It $-invariant since
\begin{align*}
    \lambda \Vert X\Vert^2 = g(AX,X) &= - g( \nabla_X N , X) = g ( (\nabla_{X} \It )(\It N) + \It  \nabla_{X} \It N , X ) \\
    &= - g ( (\nabla_{\It N} \It )(X), X) - g(\nabla_X \It N , \It X) \\
    &= - g(\nabla_X \It N , \It X).
\end{align*}
\end{proof}

Motivated by the results of Lemma~\ref{lemma2}, we use Proposition~\ref{prop1} to obtain the following curvature expression.
\begin{corollary}\label{cor1}
If $X$ is a vector field in $(N,\mathbb J_2 N)^\perp$, we have
\begin{align*}
&R(\mathbb J_2 N,\mathbb J_2X,X,N)=R^M\Big(d\pi(\mathbb J_2 N),d\pi(\mathbb J_2 X),d\pi(X),d\pi(N)\Big)\\
&+(b+2a)g^h(N,X)^2+(-4a+3b-c)g^h(N,\mathbb J_2 X)^2
+a(\Vert N^h \Vert^2 \Vert X\Vert^2+\Vert X^h\Vert^2)\\
&-(2a+b)\Vert N^h \Vert^2\Vert X^h\Vert^2.
\end{align*}
\end{corollary}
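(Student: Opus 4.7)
The plan is to specialize the formula of Proposition~\ref{prop1} to the quadruple $(X,Y,Z,T)=(\mathbb J_2 N, \mathbb J_2 X, X, N)$ and to use the orthogonality hypotheses together with the antisymmetry of $\mathbb J_2$ to kill the bulk of the thirteen terms that appear.

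First I would record the easy identities that do the work. Since $\mathbb J_2^2=-\id$, we have $\mathbb J_2(\mathbb J_2 N)=-N$ and $\mathbb J_2(\mathbb J_2 X)=-X$, which converts every term containing $\mathbb J_2 X$ or $\mathbb J_2 N$ at the inner (squared) position into one involving $N$ or $X$ directly. Next, because $X\in (N,\mathbb J_2 N)^{\perp}$ and $\mathbb J_2$ is $g$-skew symmetric, we get
\begin{equation*}
g(N,X)=0,\quad g(\mathbb J_2 X,N)=-g(X,\mathbb J_2 N)=0,\quad g(\mathbb J_2 N,N)=0,\quad g(\mathbb J_2 X,X)=0.
\end{equation*}
Finally, because the Eells--Salamon $\mathbb J_2$ preserves the splitting $\hh\oplus\vv$, the horizontal component commutes with $\mathbb J_2$, so $g^h(\mathbb J_2 A,B)=-g^h(A,\mathbb J_2 B)$ and in particular $g^h(\mathbb J_2 N,N)=g^h(\mathbb J_2 X,X)=0$.

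With these identities in hand I would go through the formula of Proposition~\ref{prop1} line by line. The two lines with mixed factors $g^h(\cdot,\cdot)g(\mathbb J_2\cdot,\cdot)$ involving $a$ all contain one of the vanishing pairings $g(\mathbb J_2 X,N)$, $g(\mathbb J_2 N,X)$, $g(N,X)$, $g(\mathbb J_2 N,N)$ or $g(\mathbb J_2 X,X)$, so they drop out entirely; the same is true of the $(b-c)$ and $c$ lines at the bottom. This leaves only the $R^M$ term, the first group of three purely horizontal $(b+2a)$-terms, the $(c-5b)$ line, and the two diagonal $a$-terms coming from $g^h(\mathbb J_2(\mathbb J_2 N),N)g(\mathbb J_2(\mathbb J_2 X),X)$ and $g(\mathbb J_2(\mathbb J_2 N),N)g^h(\mathbb J_2(\mathbb J_2 X),X)$.

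Finally I would collect these surviving contributions. Replacing $\mathbb J_2(\mathbb J_2 N)=-N$ and $\mathbb J_2(\mathbb J_2 X)=-X$ and using horizontal skew symmetry to rewrite $g^h(\mathbb J_2 N,X)=-g^h(N,\mathbb J_2 X)$, the $(b+2a)$-line produces the coefficients $-2(b+2a)g^h(N,\mathbb J_2 X)^2+(b+2a)g^h(N,X)^2-(b+2a)\Vert N^h\Vert^2\Vert X^h\Vert^2$, the $(c-5b)$-line contributes $(5b-c)g^h(N,\mathbb J_2 X)^2$, and the two $a$-terms yield $a\Vert N^h\Vert^2\Vert X\Vert^2+a\Vert X^h\Vert^2$ (using $\Vert N\Vert=1$). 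Summing the coefficients of $g^h(N,\mathbb J_2 X)^2$ gives $-2(b+2a)+(5b-c)=-4a+3b-c$, which reproduces the stated expression. The only potential pitfall is sign bookkeeping in the many tensorial terms, but once the vanishing pairings are identified the calculation is mechanical.
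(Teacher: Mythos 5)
Your computation is correct and is exactly what the paper intends (the corollary is stated as a direct specialization of Proposition~\ref{prop1} with no separate proof): substituting $(\mathbb J_2 N,\mathbb J_2 X,X,N)$, using $g(N,X)=g(\mathbb J_2 N,X)=g(\mathbb J_2 N,N)=g(\mathbb J_2 X,X)=0$ and the compatibility of $\mathbb J_2$ with the splitting $\hh\oplus\vv$, your surviving terms and final coefficients all check out. The only blemish is expository: your claim that the two mixed $a$-lines ``drop out entirely'' is contradicted by your own (correct) next sentence, since the diagonal terms $a\,g^h(\mathbb J_2(\mathbb J_2 N),N)\,g(\mathbb J_2(\mathbb J_2 X),X)$ and $a\,g(\mathbb J_2(\mathbb J_2 N),N)\,g^h(\mathbb J_2(\mathbb J_2 X),X)$ involve $g(N,N)$ rather than $g(\mathbb J_2 N,N)$ and do survive, yielding the $a(\Vert N^h\Vert^2\Vert X\Vert^2+\Vert X^h\Vert^2)$ contribution.
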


From this symmetry of the curvature tensor, we can eliminate vertical normal vector fields.
\begin{proposition}\label{proppage6}
Let $H$ be a hypersurface of $Z(M)$ such that $A\varphi = \varphi A$. Then the normal vector $N$ cannot be vertical.
\end{proposition}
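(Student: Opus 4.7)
My plan is to proceed by contradiction, assuming $N$ is vertical. Since $\It$ acts as $-\Io$ on the two-dimensional vertical distribution $\vv$, it preserves $\vv$, so $\It N$ is also vertical. Hence $\{N, \It N\}$ spans $\vv$ and $(N, \It N)^\perp$ coincides with the horizontal distribution $\hh$; in particular, every eigenvector $X \in E_\lambda \cap (\It N)^\perp$ is horizontal.

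The first step is to apply Corollary~\ref{cor1}: with $N \in \vv$ and $X \in \hh$, the quantities $N^h$, $d\pi(N)$, $d\pi(\It N)$, $g^h(N, X)$ and $g^h(N, \It X)$ all vanish, while $\Vert X^h \Vert^2 = \Vert X \Vert^2$, so the expression collapses to $R(\It N, \It X, X, N) = a \Vert X \Vert^2$. Lemma~\ref{lemma2} evaluates the same quantity (via the sign flip $R(\It N, \It X, X, N) = -R(\It N, X, \It X, N)$) as $\lambda(\lambda - \mu) \Vert X \Vert^2$, so equating forces $\lambda^2 - \mu \lambda = a$. Since $a > 0$, every eigenvalue $\lambda$ of $A|_\hh$ is nonzero.

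Next, I would extract the $\It N$-component of $\nabla_X \It X$ directly from the hypersurface data. The nearly K{\"a}hler identity $(\nabla_X \It) X = 0$ gives $\nabla_X \It X = \It \nabla_X X$, and combining the $\It$-invariance of $g$ with $g(X, N) = 0$ and Weingarten's formula $\nabla_X N = -AX$ yields
$$g(\nabla_X \It X, \It N) = g(\nabla_X X, N) = g(AX, X) = \lambda \Vert X \Vert^2,$$
which is nonzero by the previous step.

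The contradiction then comes from the twistor geometry. Since $\pi: Z(M) \to M$ is a Riemannian submersion with totally geodesic fibers, the vertical part of $\nabla_X \It X$ for horizontal $X, \It X$ is the O'Neill A-tensor $A_X \It X$. To show this vanishes, I would expand $R(X, \It X, \It X, X)$ via Proposition~\ref{prop1} with all four arguments horizontal: a direct computation shows that the $a$-, $b$-, and $c$-dependent contributions cancel identically, leaving $R(X, \It X, \It X, X) = R^M(d\pi(X), d\pi(\It X), d\pi(\It X), d\pi(X))$. O'Neill's formula for horizontal sectional curvatures, $R^Z(X, Y, Y, X) = R^M(d\pi(X), d\pi(Y), d\pi(Y), d\pi(X)) - 3 \Vert A_X Y \Vert^2$, then forces $A_X \It X = 0$, so $\nabla_X \It X \in \hh$ and $g(\nabla_X \It X, \It N) = 0$, contradicting the previous paragraph. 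The main technical obstacle is verifying the cancellation of the non-$R^M$ contributions at $(X, \It X, \It X, X)$ in Proposition~\ref{prop1}---a mechanical but delicate check relying on $g(\It X, X) = 0$ and $g(\It \It X, X) = -\Vert X \Vert^2$.
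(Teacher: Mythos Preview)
Your argument is correct, but it takes a longer path than the paper's. Both proofs hinge on the same two ingredients---Corollary~\ref{cor1} collapsing to $R(\It N,\It X,X,N)=a\Vert X\Vert^2$ when $N\in\vv$ and $X\in\hh$, and the vanishing of the vertical part of a horizontal covariant derivative via O'Neill---but they deploy them in opposite order. The paper observes that for horizontal $X$ one has $(\nabla_X X)^v=\tfrac12[X,X]^v=0$ \emph{immediately}, so $\lambda\Vert X\Vert^2=g(N,\nabla_X X)=0$; then Lemma~\ref{lemma2} forces $R(\It N,\It X,X,N)=0$, contradicting $a\Vert X\Vert^2\neq 0$. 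You instead first extract $\lambda(\lambda-\mu)=a>0$ from Corollary~\ref{cor1} and Lemma~\ref{lemma2}, hence $\lambda\neq 0$, and then must establish $(\nabla_X \It X)^v=A^{\mathrm{O'N}}_X(\It X)=0$ by the curvature comparison between Proposition~\ref{prop1} and O'Neill's horizontal sectional curvature formula.

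That last step is the only real cost of your route: verifying the cancellation of all the $a$-, $b$-, $c$-terms in Proposition~\ref{prop1} at $(X,\It X,\It X,X)$ is a genuine (if mechanical) computation, whereas the paper gets away with $A^{\mathrm{O'N}}_X X=0$, which is free by antisymmetry. In fact your own identity $g(\nabla_X\It X,\It N)=g(\nabla_X X,N)$ already hands you the paper's shortcut, since $N$ is vertical and $(\nabla_X X)^v=0$; you could then dispense with the entire curvature-comparison paragraph. Your longer route does buy something, though: it shows the nontrivial fact $A^{\mathrm{O'N}}_X(\It X)=0$ holds for \emph{all} horizontal $X$ in these twistor spaces, independent of the hypersurface hypothesis.
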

\begin{proof}
If $N$ is vertical then so is $\It N$ and all eigenvectors orthogonal to it must be horizontal.
But, for such an eigenvector $X$ associated to the eigenvalue $\lambda$ and orthogonal to $\It N$,
\begin{align*}
\lambda \Vert X\Vert^2& =g(AX,X)=g(-\nabla_{X}N,X)=g(N,\nabla_{X}X)\\
&=g(N,\frac12 [X,X])=0,
\end{align*}
by O'Neill~\cite{Oneill}, since $X$ is horizontal.
However, this implies, by Lemma~\ref{lemma2}, that  $R(\mathbb J_2N,\mathbb J_2X,X,N)$ is zero, that is, by $N^h=0$ and  Corollary~\ref{cor1},
$$a \Vert X^h\Vert^2 =0 ,$$
which contradicts the fact that $X$ is horizontal, since $a\neq 0$.
\end{proof}

\section{The case $(M,Z(M))= (\sq , \CP^3)$}

As $\scal_{\sq} = 12$, the constants in Proposition~\ref{prop1} take on the values $ a= \frac{3}{8}, b=\frac{1}{8}$ and $c= 2$. Since
$$
R^{\sq}(U,V,W,T)=g(V,W)g(U,T)-g(U,W)g(V,T) \quad \forall U,V,W,T \in T_p \sq,
$$
we have
\begin{align*}
\pi^* R^{\sq}(\mathbb J_2 N,\mathbb J_2 X, X,N)&=g^h(\mathbb J_2 X,N)^2 .
\end{align*}

From Corollary~\ref{cor1}, we obtain that for any $X\in (N,\mathbb J_2 N)^\perp$
\begin{align*}
R(\mathbb J_2 N,\mathbb J_2 X, X,N) &=
\frac{7}8g^h(X,N)^2-\frac{17}8 g^h(\mathbb J_2 X,N)^2
+\frac38\Big(\Vert X^h\Vert^2+\Vert N^h\Vert^2\Vert X\Vert^2\Big)\\
&-\frac78\Vert X^h\Vert^2\Vert N^h\Vert^2
\end{align*}
and
\begin{align*}
- R(\mathbb J_2 N,X,\mathbb J_2 X,N) &=
-\frac{17}8g^h(X,N)^2+\frac{7}8 g^h(\mathbb J_2 X,N)^2
+\frac38\Big(\Vert X^h\Vert^2+\Vert N^h\Vert^2\Vert X\Vert^2\Big)\\
&-\frac78\Vert X^h\Vert^2\Vert N^h\Vert^2 .
\end{align*}
By Lemma~\ref{lemma2}, when $X\in E_\lambda \cap (\mathbb J_2 N)^\perp$
$$
g^h(X,N)^2=g^h(\mathbb J_2X, N)^2.
$$
As this must remain true for the eigenvector $X+\mathbb J_2 X$, we infer that
\begin{equation}\label{Eq}
    g^h(X,N)=g^h(\mathbb J_2X, N)=0.
\end{equation}

We then easily prove that the vertical component of the normal vector field must be zero.

\begin{proposition}\label{lemmaA}
Let $H$ be a hypersurface of $\CP^3$ such that $A\varphi = \varphi A$. Then the normal vector field must be horizontal.
\end{proposition}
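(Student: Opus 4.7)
The plan is to extract a strong orthogonality condition from equation~\eqref{Eq} and then carry out a dimension count in $TZ$. The starting point is that, for every eigenvector $X$ of $A$ lying in $(\It N)^\perp$, equation~\eqref{Eq} gives $g(X^h, N^h) = g^h(X, N) = 0$. Because $A$ is self-adjoint and preserves $(\It N)^\perp \subset TH$, its restriction to this $4$-dimensional subspace decomposes orthogonally into eigenspaces $E_\lambda \cap (\It N)^\perp$, so by linearity the identity $g(X^h, N^h) = 0$ holds for \emph{every} $X \in (\It N)^\perp \cap TH$. This means $N^h$ sits in the $g$-orthogonal complement (inside $TZ$) of a $4$-dimensional subspace, namely the $2$-plane $\Span(N, \It N)$.

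I would then write $N^h = \alpha N + \beta \It N$ and determine the two coefficients. The coefficient $\alpha = g(N^h, N) = \|N^h\|^2$ is immediate. For $\beta = g(N^h, \It N)$, the key observation is that both $\Io$ and $\It$ preserve $\hh$ and $\vv$ (since $\It = \pm \Io$ on each, and $\Io$ itself does), so the decomposition $N = N^h + N^v$ induces $\It N = \It N^h + \It N^v$ with $\It N^h$ horizontal and $\It N^v$ vertical. Pairing $N^h$ against this decomposition gives $\beta = 0$, because $g(N^h, \It N^h) = 0$ by $g$-compatibility of $\It$ and $g(N^h, \It N^v) = 0$ by orthogonality of $\hh$ and $\vv$.

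Having established $N^h = \|N^h\|^2 N$, the conclusion follows: either $N^h = 0$, which is ruled out by Proposition~\ref{proppage6}, or $N$ is a positive scalar multiple of the horizontal vector $N^h$, hence itself horizontal (and in fact $\|N^h\| = 1$). The main conceptual work has already been done upstream, since equation~\eqref{Eq} encodes the nontrivial input coming from the Codazzi equation, Corollary~\ref{cor1}, and the symmetry of Lemma~\ref{lemma2}. Once that equation is in hand, the proposition reduces to linear algebra on the $g$-orthogonal decomposition of $TZ$, and the only point requiring care is the cancellation $\beta = 0$, which leans precisely on the fact that $\It$ respects the horizontal/vertical splitting.
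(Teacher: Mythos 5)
Your proof is correct and rests on exactly the same ingredients as the paper's: Equation~\eqref{Eq} applied to a spanning set of eigenvectors of the self-adjoint operator $A|_{(\It N)^\perp\cap TH}$, a four-versus-two dimension count, and Proposition~\ref{proppage6} to kill the vertical alternative. The paper organizes the same linear algebra dually --- assuming $N^v\neq 0$ it deduces from \eqref{Eq} that every eigenvector in $(\It N)^\perp$ is horizontal and hence that $N^h=0$ --- whereas you track the horizontal component of $N$ and pin it down as $\|N^h\|^2 N$, but the content is identical.
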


\begin{proof}
If $N^v\neq 0$, then $(N^v ,\It N^v)$ is a basis of the vertical distribution. But Equation~\eqref{Eq} forces
$$g^v(X,N)=g^v(\mathbb J_2X, N)=0$$ 
as $X$ is an eigenvector orthogonal to $N$ and $\It N$, so $X$ must be horizontal. Since this applies to all eigenvectors of $A$ in $(\mathbb J_2 N)^\perp$, they must be horizontal and orthogonal to $N$, hence $N^h$ must vanish, and we conclude with Proposition~\ref{proppage6}.
\end{proof}

The complementary contingency is resolved using tools from twistor theory.
\begin{proposition}\label{lemmaB}
Let $H$ be a hypersurface of $\CP^3$ such that $A\varphi = \varphi A$. Then $N$ cannot be horizontal.
\end{proposition}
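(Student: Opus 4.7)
The plan is to derive a contradiction from the hypothesis that $N$ is horizontal everywhere. Since $\vv \perp N$, we have $\vv \subset TH$; as $\vv$ is integrable with leaves equal to the twistor fibers $\CP^1$, these fibers lie locally in $H$, so $H = \pi^{-1}(S)$ where $S = \pi(H) \subset \sq$ is a hypersurface. The twistor fibers are totally geodesic in the K\"ahler $(\CP^3, g_1, \Io)$, and a direct Koszul computation shows that rescaling only the vertical part of the metric preserves the vanishing of their second fundamental form, so they remain totally geodesic in the nearly K\"ahler $(\CP^3, g, \It)$. Consequently $g(AV, W) = g(N, \nabla_V W) = 0$ for all $V, W \in \vv$, and combined with the Hopf condition $g(AV, \It N) = \mu\, g(V, \It N) = 0$ this forces $A(\vv) \subset \mathcal{W} := (N, \It N)^\perp \cap \hh$.

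The central input is twistor-theoretic. For $V \in \vv$, O'Neill's formulas for a submersion with totally geodesic fibers give $\nabla_V N = \mathcal A_N V$, so $A|_\vv = -\mathcal A_N|_\vv$. The vertical-to-horizontal block $\mathcal A_N$ is $\Io$-linear: since $(\CP^3, g_1, \Io)$ is K\"ahler, $\nabla^{g_1} \Io = 0$ and $\Io$ preserves $\hh \oplus \vv$, yielding $\mathcal A^{(1)}_N(\Io V) = \Io\, \mathcal A^{(1)}_N V$; passing to the nearly K\"ahler metric only rescales $\mathcal A_N V$ by a constant factor, so the $\Io$-linearity persists. Since $\It = \Io$ on $\hh$ while $\It = -\Io$ on $\vv$, this gives
\[
\mathcal A_N(\It V) = -\mathcal A_N(\Io V) = -\Io\, \mathcal A_N V = -\It\, \mathcal A_N V,
\]
i.e.\ $A|_\vv$ \emph{anticommutes} with $\It$.

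On the other hand, $A\varphi = \varphi A$ applied to $V \in \vv \subset (\It N)^\perp$ (where $\varphi = \It$) yields $A\It V = \It AV$, so $A|_\vv$ also commutes with $\It$. Being both $\It$-linear and $\It$-antilinear, $A|_\vv$ vanishes identically; consequently $\vv$ is an eigenspace of $A$ with eigenvalue $0$. For any unit $V \in \vv$, Lemma~\ref{lemma2} then gives $R(\It N, V, \It V, N) = 0$, while Corollary~\ref{cor1}, evaluated with $V^h = 0$, $\|N^h\|^2 = 1$ and $g^h(V, N) = g^h(\It V, N) = 0$, computes $R(\It N, \It V, V, N) = \tfrac38$. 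Since these two curvature components are opposite, we obtain $\tfrac38 = 0$, the desired contradiction. The main obstacle is the $\Io$-linearity of the O'Neill $\mathcal A$-tensor (in particular its survival under the rescaling from the K\"ahler to the nearly K\"ahler metric); once this twistor identity is in hand, the sign reversal $\It = -\Io$ on $\vv$ automatically defeats the commutation hypothesis.
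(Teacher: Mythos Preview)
Your proof is correct and takes a genuinely different route from the paper's. Both arguments ultimately exploit the fact that the off-diagonal block of $A$ linking $\vv$ to $\mathcal{W}=(N,\It N)^\perp\cap\hh$ is $\Io$-linear, hence $\It$-antilinear. The paper obtains this by an explicit frame computation: using $(AX)^v=-\tfrac12\,\reallywidehat{R^{\sq}(X\wedge N)}$ for horizontal $X$, it produces the concrete block $F=-\tfrac12\left(\begin{smallmatrix}0&-1\\1&0\end{smallmatrix}\right)$ and checks directly that this \emph{nonzero} rotation block is incompatible with $A\varphi=\varphi A$. You instead deduce the $\Io$-linearity of $\mathcal{A}_N$ abstractly from $\nabla^{g_1}\Io=0$ on the K\"ahler twistor space together with the scaling behaviour of the O'Neill tensor under the canonical variation; this is more conceptual but leaves you without the nonvanishing of the block, so you must take the detour of concluding $A|_\vv=0$ from the hypothesis and then contradicting it via Lemma~\ref{lemma2} and Corollary~\ref{cor1} (where $a=\tfrac38$, $\|N^h\|^2=1$, $V^h=0$ give $R(\It N,\It V,V,N)=\tfrac38$). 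A pleasant dividend of your approach is that it is uniform: since $(\F,g_1,\Io)$ is also K\"ahler and the fibres are still totally geodesic, the same argument (with $a=\tfrac34$) disposes of the horizontal case for $\F$ without the separate $R^{\CP^2}$ matrix computation the paper carries out in Proposition~\ref{prop15}.
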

\begin{proof}
If $N^v =0$, then for any horizontal $X$, we have, by O'Neill,
$$ (AX)^v = (-\nabla_X N )^v = -\frac12 ([X,N])^v.$$
Let $p= (x_0 ,I) \in H \subset \CP^3 = Z(\sq)$, $x_0\in \sq$ and $I$ a complex structure on $T_{x_0}\sq$. Take a positive orthonormal frame $(e_1,e_2,e_3,e_4)$ of $T_{x_0}\sq$ such that, at $p$:
$$
  e_1= d\pi(N), e_2 = I e_1, e_3\in (e_1,e_2)^\perp , e_4=I e_3.  
$$
Let $\vv_p$ be the vertical space at $p\in \CP^3$, i.e. the tangent space to the fibre.
We identify $\bigwedge^2 T_{x_0}\sq$ with $\so (T_{x_0}\sq)$, then there exists a surjection \cite{deBartolomeis}
\begin{align*}
 \so (T_{x_0}\sq) &\to \vv_{(x_0,I)}\\
P &\mapsto \widehat P:=[I,P]=IP-PI.
\end{align*}
Denote by $(I^+,J^+,K^+,I^-,J^-,K^-)$ the basis of $\bigwedge^2 T_{x_0}\sq$ with
\begin{center}\label{system}
    \(
        \left\{
\begin{array}{ccc}
I^+&=&e_1\wedge e_2+e_3\wedge e_4\\
J^+&=&e_1\wedge e_3-e_2\wedge e_4\\
K^+&=&e_1\wedge e_4+e_2\wedge e_3
\end{array}
\right.
    \)
  \hspace{.2in}  and \hspace{.2in}
    \(
        \left\{
\begin{array}{ccc}
I^-&=&e_1\wedge e_2-e_3\wedge e_4\\
J^-&=&e_1\wedge e_3+e_2\wedge e_4\\
K^-&=&-e_1\wedge e_4+e_2\wedge e_3
\end{array}
\right.
    \)
\end{center}
so that $I^+ = I$. From \cite{Davidov1991,deBartolomeis}, we know that 
$$ (AX)^v =-\frac12 [X,N]^v = -\frac12 \reallywidehat{R^{\sq}(X\wedge N)}.$$
One can easily check that
$$ R^{\sq}(e_3 \wedge N) = R^{\sq}(e_3 \wedge e_1) = e_1 \wedge e_3 = \frac12 (J^+ + J^-),$$
so
$\reallywidehat{R^{\sq}(e_3 \wedge N)} = K^+$.

Identifying $e_2$, $e_3$ and $e_4$ with their horizontal lifts, we have
$(Ae_3)^v = - \frac12 K^+$. Similarly $(Ae_4)^v = \frac12 J^+ .$

Therefore the block matrix of $A$ in the basis $\Big(\{e_2\},\{ e_3,e_4\},\{J^+,K^+\}\Big)$  is
$$
A=\left(\begin{array}{ccc}\mu &0&0\\0&E&F\\0&F&G\end{array}\right) 
\textrm{ with } F=-\frac12 \left(\begin{array}{ccc}0&-1\\1&0\end{array}\right),
$$
while $\varphi$ is
$$
\left(\begin{array}{ccc}0&0&0\\0&I&0\\0&0&-I\end{array}\right) \textrm{ with } I=\left(\begin{array}{ccc}0&-1\\1&0\end{array}\right) .
$$
As, by hypothesis $A\varphi = \varphi A$, a straightforward computation shows this to be impossible.
\end{proof}
Combining Propositions~\ref{lemmaA} and \ref{lemmaB} shows the $\CP^3$ case of the Theorem.

\section{The case $(M,Z(M))= (\overline{\CP^2} , \F)$}

The curvature tensor of $(\CP^2, g_{\CP^2}, \I)$ is 
\begin{align*}
R^{\CP^2}(U,V,W,S)&=g(U,S)g(V,W)-g(U,W)g(V,S)-g(U,\I W)g(V,\I S)\\
&+g(U,\I S)g(V,\I W)-2g(U,\I V)g(W,\I S)
\end{align*}
for $U,V,W$ and $S$ in $T_{x_0}\CP^2$ ($x_0 \in \CP^2$).
We still denote by $\mathbb J_{\CP^2}$ the almost complex structure induced on the horizontal distribution $\hh$, hence,
\begin{align*}
\pi^*R^{\CP^2}(\mathbb J_2N,\mathbb J_2X,X,N)&= g^h(N,\mathbb J_2X)^2 +
2g^h(\mathbb J_{\CP^2}N,X)^2-g^h(\mathbb J_{\CP^2}N,\mathbb J_2X)^2\\
&+g^h(\mathbb J_2N,\mathbb J_{\CP^2}N)g^h(\mathbb J_2X,\mathbb J_{\CP^2}X),
\end{align*}
and, as $\scal_{\CP^2} = 24$ and $t=\frac14$, $a=\tfrac34$, $b=\tfrac14$ and $c=4$.
From Corollary~\ref{cor1}, we obtain\label{page8}
\begin{align*}
&R(\mathbb J_2 N,\mathbb J_2X,X,N)=\tfrac74 g^h(N,X)^2 - \tfrac{21}{4} g^h(N,\mathbb J_2 X)^2 
+2g^h(\mathbb J_{\CP^2} N,X)^2 
-g^h(\mathbb J_{\CP^2}N,\mathbb J_2X)^2 \\
&+ \tfrac34 (\Vert N^h \Vert^2 \Vert X\Vert^2 +\Vert X^h\Vert^2)-\tfrac74 \Vert N^h \Vert^2\Vert X^h\Vert^2
+g^h(\mathbb J_2N,\mathbb J_{\CP^2}N)g^h(\mathbb J_2X,\mathbb J_{\CP^2}X).
\end{align*}
 We deduce, by Lemma~\ref{lemma2}:
\begin{lemma}\label{lemma*}
If $X\in E_{\lambda}$ then 
\begin{equation}\label{equation*}
7\Big(g^h(N,X)^2-g^h(N,\mathbb J_2 X)^2\Big)+3\Big(g^h(\mathbb J_{\CP^2} N,X)^2-g^h(\mathbb J_{\CP^2} N,\mathbb J_2 X)^2\Big)=0.
\end{equation}
\end{lemma}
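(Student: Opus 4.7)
The plan is to combine the antisymmetry of Lemma~\ref{lemma2} with the explicit curvature expression displayed just above the statement (the specialisation of Corollary~\ref{cor1} to $\F$), by applying it at both $X$ and $\mathbb{J}_2 X$.

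Write $\Phi(X)$ for the right-hand side of that formula. First, $\mathbb{J}_2 X$ also lies in $E_\lambda \cap (\mathbb{J}_2 N)^\perp$, because this eigenspace is $\mathbb{J}_2$-invariant (as remarked in the introduction) and $g(\mathbb{J}_2 X, N) = -g(X, \mathbb{J}_2 N) = 0$, so $\Phi$ can legitimately be evaluated at $\mathbb{J}_2 X$ as well. Lemma~\ref{lemma2} gives $R(\mathbb{J}_2 N, X, \mathbb{J}_2 X, N) = -\Phi(X)$, while applying the formula itself to $\mathbb{J}_2 X$ and using $\mathbb{J}_2^2 = -\id$ gives $R(\mathbb{J}_2 N, \mathbb{J}_2(\mathbb{J}_2 X), \mathbb{J}_2 X, N) = \Phi(\mathbb{J}_2 X)$, i.e. $R(\mathbb{J}_2 N, X, \mathbb{J}_2 X, N) = -\Phi(\mathbb{J}_2 X)$. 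Equating the two expressions produces $\Phi(X) = \Phi(\mathbb{J}_2 X)$.

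Next, I would inspect $\Phi(X) - \Phi(\mathbb{J}_2 X)$ term by term. The norm contributions $\tfrac34(\Vert N^h\Vert^2 \Vert X\Vert^2 + \Vert X^h\Vert^2) - \tfrac74 \Vert N^h\Vert^2 \Vert X^h\Vert^2$ are invariant because $\mathbb{J}_2$ is a $g$-isometry preserving $\hh$. For the cross term, skew-adjointness of $\mathbb{J}_{\CP^2}$ yields
\[
g^h\bigl(\mathbb{J}_2(\mathbb{J}_2 X), \mathbb{J}_{\CP^2}(\mathbb{J}_2 X)\bigr) = -g^h(X, \mathbb{J}_{\CP^2}\mathbb{J}_2 X) = g^h(\mathbb{J}_{\CP^2} X, \mathbb{J}_2 X),
\]
so this term is also invariant. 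The remaining four squared inner products swap in pairs under $\mathbb{J}_2^2 = -\id$, and the identity $\Phi(X) = \Phi(\mathbb{J}_2 X)$ then collapses to
\[
\bigl(\tfrac74 + \tfrac{21}{4}\bigr)\bigl(g^h(N,X)^2 - g^h(N, \mathbb{J}_2 X)^2\bigr) + 3\bigl(g^h(\mathbb{J}_{\CP^2} N, X)^2 - g^h(\mathbb{J}_{\CP^2} N, \mathbb{J}_2 X)^2\bigr) = 0,
\]
which is precisely Equation~\eqref{equation*}.

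The main (and essentially only) non-routine point is the invariance of the mixed term $g^h(\mathbb{J}_2 N, \mathbb{J}_{\CP^2} N)\, g^h(\mathbb{J}_2 X, \mathbb{J}_{\CP^2} X)$ under $X \mapsto \mathbb{J}_2 X$, since $\mathbb{J}_2$ and $\mathbb{J}_{\CP^2}$ do not commute on $\hh$ in general. This is dispatched by the skew-adjointness of $\mathbb{J}_{\CP^2}$ alone, without any need to describe the precise algebraic relation between the two almost complex structures at a given twistor point.
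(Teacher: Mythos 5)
Your argument is correct and is exactly the deduction the paper intends when it writes ``We deduce, by Lemma~\ref{lemma2}'': substitute $\mathbb J_2 X$ for $X$ in the displayed specialisation of Corollary~\ref{cor1}, use the antisymmetry $R(\It N,X,\It X,N)=-R(\It N,\It X,X,N)$, and cancel the $\mathbb J_2$-invariant terms, mirroring the explicit computation carried out for $\CP^3$. Your careful justification that the mixed term $g^h(\mathbb J_2 N,\mathbb J_{\CP^2}N)\,g^h(\mathbb J_2 X,\mathbb J_{\CP^2}X)$ is invariant via skew-adjointness of $\mathbb J_{\CP^2}$ is a welcome detail the paper leaves implicit.
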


The next result  is key to our argument since it reduces the type of the vector field normal to $H$ to just two possibilities.

\begin{proposition}\label{proposition2}
Let $H$ be a hypersurface of $\F$ such that $A\varphi = \varphi A$. Then the normal vector $N$ must be either vertical or horizontal.
\end{proposition}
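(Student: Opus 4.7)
We argue by contradiction: suppose both $N^h$ and $N^v$ are nonzero, and derive incompatible algebraic constraints on the eigenspaces of $A$. The eigenspace $V := E_\lambda \cap (\mathbb J_2 N)^\perp$ is $\mathbb J_2$-invariant, so $X + \mathbb J_2 X$ lies in $V$ whenever $X$ does. Substituting into~\eqref{equation*} and using $(a+b)^2 - (b-a)^2 = 4ab$ produces the bilinear companion
\[
7\, g^h(N, X) g^h(N, \mathbb J_2 X) + 3\, g^h(\mathbb J_{\CP^2} N, X) g^h(\mathbb J_{\CP^2} N, \mathbb J_2 X) = 0, \quad X \in V.
\]
Setting $z(X) := g^h(N, X) + i g^h(N, \mathbb J_2 X)$ and $w(X) := g^h(\mathbb J_{\CP^2} N, X) + i g^h(\mathbb J_{\CP^2} N, \mathbb J_2 X)$---both $\mathbb J_2$-antilinear in the sense $z(\mathbb J_2 X) = -i z(X)$---the two real identities amalgamate into the single complex identity $7 z^2 + 3 w^2 = 0$ on $V$. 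Factoring $(\sqrt 7\, z + i\sqrt 3\, w)(\sqrt 7\, z - i\sqrt 3\, w) = 0$ in the integral domain of polynomial functions on $V$, one of the linear factors must vanish identically, so $w = \pm i\sqrt{7/3}\, z$ on $V$.

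Next we exploit $N^v \neq 0$: the pair $(N^v, \mathbb J_2 N^v)$ is a basis of the two-dimensional $\vv$, so any $X \in V \cap \vv$ must be orthogonal to both $N^v$ and $\mathbb J_2 N^v$, whence $V \cap \vv = \{0\}$. The horizontal projection $V \to V^h \subset \hh$ is then a $\mathbb J_2$-equivariant isomorphism, and summing over the eigenspaces $V_i$ of $A|_{(\mathbb J_2 N)^\perp}$ yields $\bigoplus_i V_i^h = \hh$, hence $\sum_i \dim V_i^h = 4$. Meanwhile, on $\hh$ the decomposition $\mathfrak{so}(\hh) = \mathfrak{su}(2)_+ \oplus \mathfrak{su}(2)_-$ places $\mathbb J_2$ and $\mathbb J_{\CP^2}$ in opposite summands, so they commute, and the involution $\mathbb J_2 \mathbb J_{\CP^2}$ has two $\mathbb J_2$-invariant eigenplanes $W_\pm$ of real dimension two; accordingly decompose $N^h = N^+ + N^-$.

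If $N^+ = 0$ (symmetrically $N^- = 0$), then $\mathbb J_{\CP^2} N^h = \mathbb J_2 N^h$ forces $w = iz$ on $\hh$, and the identity of the first paragraph collapses to $4 z^2 = 0$ on every $V_i^h$; hence $V_i^h \perp \Span(N^h, \mathbb J_2 N^h) = W_-$, so $V_i^h \subset W_+$ (two-dimensional), contradicting $\sum_i \dim V_i^h = 4$. If $A|_{(\mathbb J_2 N)^\perp}$ has a unique eigenvalue, then $V^h = \hh$ and $w = \pm i\sqrt{7/3}\, z$ holds on all of $\hh$; this unpacks to $\mathbb J_2 \mathbb J_{\CP^2} N^h = c\, N^h$ with $c^2 = 7/3$, which is impossible since $\mathbb J_2 \mathbb J_{\CP^2}$ admits only $\pm 1$ as eigenvalues.

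There remain two eigenspaces $V_1, V_2$ of real dimension two with $N^\pm \neq 0$, in which each $V_i^h$ is the $\mathbb J_2$-complex line in $\hh$ cut out by $w = \epsilon_i z$ with $\epsilon_i^2 = -7/3$, and the direct-sum condition $V_1^h \oplus V_2^h = \hh$ forces $\epsilon_1 \neq \epsilon_2$. Our plan to rule out this configuration is to substitute $w = \epsilon_i z$ into the expression of Corollary~\ref{cor1} and apply Lemma~\ref{lemma2}, reducing $\lambda_i(\lambda_i - \mu)\|X\|^2$ to a closed-form expression in $\|N^\pm\|$ and $\epsilon_i$; then comparing the pair of equations for $i = 1, 2$ with the explicit description of $V_i^h$ in the basis $(N^+, \mathbb J_2 N^+, N^-, \mathbb J_2 N^-)$ of $\hh$ should yield an identity in $\|N^\pm\|$ and $\sqrt{7/3}$ that has no real positive solution, contradicting the previous paragraph. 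This final algebraic verification is the delicate step and the main obstacle; the earlier steps are essentially forced by Lemma~\ref{lemma*} and Corollary~\ref{cor1}.
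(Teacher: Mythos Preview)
Your reduction is correct and elegantly packaged: the complex factorization $7z^2+3w^2=0$ recovers exactly the paper's Lemma~\ref{lemma13} (your $\epsilon_i$ are the paper's $\delta_\pm$ in disguise), your injectivity of the horizontal projection is Lemma~\ref{lemma12}, and your elimination of the degenerate cases $N^\pm=0$ and of a single eigenvalue matches the paper's corollary following Lemma~\ref{lemma13}. So the architecture up to the last paragraph is essentially the paper's, in different coordinates.

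The gap is the final step, which you explicitly leave as a plan. The trouble is that the plan, as stated, is almost certainly insufficient. Lemma~\ref{lemma2} and Corollary~\ref{cor1} together give you, for each eigenspace,
\[
-\lambda_i(\lambda_i-\mu)\,\|X\|^2 \;=\; \text{(explicit curvature expression in }\|N^h\|,\|N^v\|,\epsilon_i\text{)},
\]
which is two equations in the three unknowns $\lambda_1,\lambda_2,\mu$ on top of the geometric data. There is no reason these alone should be inconsistent; for generic right-hand sides one can solve for $\lambda_1,\lambda_2$ once $\mu$ is fixed. The paper closes the argument with an ingredient you do not invoke: Gray's nearly K\"ahler identity
\[
\|(\nabla_X\mathbb J_2)N\|^2 \;=\; R(X,N,X,N)+R(\mathbb J_2 N,\mathbb J_2 X,X,N),
\]
combined with the six-dimensional constant-type condition $\|(\nabla_X\mathbb J_2)N\|^2=\|X\|^2$ (Equations~\eqref{star} and~\eqref{starstar}). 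This produces a relation that is \emph{free of the eigenvalues} $\lambda_i,\mu$; evaluated on the explicit eigenvector it collapses to $-4\delta_+^2\|N^h\|^4\|N^v\|^4=0$, contradicting $\delta_+\neq 0$ and $N^h,N^v\neq 0$. Without this extra curvature identity (or something of equivalent strength), your outlined endgame does not go through.
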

\begin{proof}
The proof of Proposition~\ref{proposition2} consists of a series of lemmas.

Assume that $N$ is neither vertical nor horizontal. We consider a basis of the $T_p \CP^2$ given by
\begin{align*}
 e_1&= \frac{d\pi(N^h)}{\Vert N^h\Vert} ; e_2= I e_1 ;\\
 e_3 &= \begin{cases}
 \textrm{unitary part of $\I e_1$ that is normal to $(e_1,e_2)$, if non-zero,}\\
 \textrm{any unit vector in $(e_1,e_2)^\perp$, otherwise}
 \end{cases}.\\
 e_4&= I e_3 .
 \end{align*}
Recall that since $\CP^2$ is self-dual, $\F= Z(\overline{\CP^2})$ and $\I \in \bigwedge^{2}_{-}(\overline{\CP^2})$, so (using the same notation as on page~\pageref{system}) we can consider $\tilde{c},\tilde{s} \in \R$, with $\tilde{c}^2 + \tilde{s}^2 = 1$, such that $\I = \tilde{c} I^- + \tilde{s} J^-$, which, in the basis 
$(e_1,e_2,e_3,e_4)$, translates as
$$
I =I^+=\left(\begin{array}{rrrr}
0&-1&0&0\\
1&0&0&0\\
0&0&0&-1\\
0&0&1&0
\end{array}
\right),
\I=\left(\begin{array}{rrrr}
0&-\tilde{c}&-\tilde{s}&0\\
\tilde{c}&0&0&-\tilde{s}\\
\tilde{s}&0&0&\tilde{c}\\
0&\tilde{s}&-\tilde{c}&0
\end{array}
\right).
$$

We first describe the solutions to Equation~\eqref{equation*} in Lemma~\ref{lemma*} in the basis we just constructed.
\begin{lemma}\label{lemma13}
If $\tilde{s}\neq 0$, $d\pi(E_\lambda\cap (\mathbb J_2 N)^\perp)$ is included in
$$
\Vect\left(\left(\begin{array}{c}1\\0\\0 \\\delta_-\end{array}\right),\left(\begin{array}{c}0\\1\\-\delta_-\\0\end{array}\right)\right)
\bigcup
\Vect\left(\left(\begin{array}{c}1\\0\\0 \\\delta_+\end{array}\right),\left(\begin{array}{c}0\\1\\-\delta_+\\0\end{array}\right)\right),
 $$
with $\delta_\pm=\frac{6\tilde{c}\tilde{s}\pm\sqrt{84}\tilde{s}}{6\tilde{s}^2}$.\\
For $\tilde{s}=0$, $d\pi(E_\lambda\cap (\mathbb J_2 N)^\perp)$ is included in $\Vect(e_3, e_4)$.
\end{lemma}
\begin{proof}
Assume $X\in E_{\lambda}\cap (\mathbb J_2 N)^\perp$, with $X^h = (x,y,z,t)$ its coordinates in the basis $(e_1,e_2,e_3,e_4)$ (identifying vectors tangent to the base manifold with their horizontal lifts). Then, by Lemma~\ref{lemma*}, we have
\begin{align*}
0&=7\Big(g^h(X,N)^2-g^h(X,\mathbb J_2N)^2\Big)+3\Big(g^h(X,\I N)^2-g^h(X,\I \mathbb J_2 N)^2\Big)\\
&=7(x^2-y^2)+3\Big((\tilde{c}y+\tilde{s}z)^2-(-\tilde{c}x+\tilde{s}t)^2\Big)\\
&=(7-3\tilde{c}^2)(x^2-y^2)+3\tilde{s}^2(z^2-t^2)+6\tilde{c}\tilde{s}(xt+yz).
\end{align*}
Re-writing this system with the eigenvector $X+\mathbb J_2 X$, yields
\begin{align*}
0&=7g^h(X,N)g^h(X,\mathbb J_2N)+3g^h(X,\I N)g^h(X,\I \mathbb J_2N)\\
&=7xy+3(\tilde{c}y+\tilde{s}z)(-\tilde{c}x+\tilde{s}t)\\
&=(7-3\tilde{c}^2)xy+3\tilde{s}^2zt-3\tilde{c}\tilde{s}(xz-yt),
\end{align*}
so $X^h = (x,y,z,t)$ must satisfy the system
\begin{equation}\label{dagger}
\begin{cases}
(7-3\tilde{c}^2)(x^2-y^2)+3\tilde{s}^2(z^2-t^2)+6\tilde{c}\tilde{s}(xt+yz)=0 ,\\
  (7-3\tilde{c}^2)xy+3\tilde{s}^2zt-3\tilde{c}\tilde{s}(xz-yt)=0 .
  \end{cases}
  \end{equation}
We work with complex numbers $z_1=x+iy$ and $z_2=z+it$ to re-write \eqref{dagger} as a polynomial in $z_2$:
 $$
 3\tilde{s}^2z_2^2-6i\tilde{c}\tilde{s}z_1z_2+(7-3\tilde{c}^2)z_1^2=0.
 $$
 If $\tilde{s}\neq 0$, its roots are $z_2=\frac{6i\tilde{c}\tilde{s} z_1\pm \sqrt{84}i\tilde{s}z_1}{6\tilde{s}^2}=i\delta_\pm z_1$.\\
 Note that $\delta_-\delta_+=-\frac{7-3\tilde{c}^2}{3\tilde{s}^2}$, so neither $\delta_-$ nor $\delta_+$ can vanish.
 
 If $\tilde{s}=0$ then the set of solutions is $\{ z_1 =0\}$.
\end{proof}

This description forces the number of eigenvalues of $A|_{(\It N)^{\perp}}$.
\begin{corollary}
The shape operator $A$ of the hypersurface $H$, restricted to $(\It N)^{\perp}$, admits two distinct eigenvalues $\lambda_1$ and $\lambda_2$.
\end{corollary}

\begin{proof}
Lemma~\ref{lemma13} implies that the dimension of $d\pi(E_\lambda\cap (\mathbb J_2 N)^\perp)$ must be at most two, and since it is $\mathbb J_2$-invariant and $\mathbb J_2 N$ cannot be neither vertical nor horizontal, the dimension of $E_\lambda\cap (\mathbb J_2 N)^\perp$ is exactly two. 
\end{proof}

Next we prove that the horizontal parts of the eigenspaces are in direct sum. 
\begin{lemma}\label{lemma12}
If $N^v \neq 0$, then 
$$d\pi:T_{(x_0 , I)} \F \cap (N,\mathbb J_2N)^\perp\to T_{x_0}\CP^2 $$
is an isomorphism. In particular, as $T_{(x_0 , I)} \F \cap (N,\mathbb J_2N)^\perp$ decomposes into a direct sum of eigenspaces of $A$, we have
$$
d\pi(E_{\lambda_1})\oplus d\pi(E_{\lambda_2})=T_{x_0}\CP^2.$$
\end{lemma}
\begin{proof}
As $T \F = \hh \oplus \vv$, at a point $z = (x_0 , I) \in H \subset \F$, write $N=(N^h,N^v)$ and $\It N = (IN^h, -IN^v)$ in their horizontal and vertical components.
If $(X^h,X^v)\in T_{(x_0 , I)} \F \cap (N,\mathbb J_2N)^\perp$, we have
$$
\left\{\begin{array}{ccc}
g^h(N,X)+g^v(N,X)&=&0\\
g^h(I N,X)- g^v(I N,X)&=&0 ,
\end{array}\right.
$$
and clearly if $N^v\neq 0$ then $d\pi$ is injective.
\end{proof}

Observe that, when $N^v\neq 0$, for reasons of dimensions, the case $\tilde{s}=0$ is excluded by Lemma~\ref{lemma12}.

We fully describe $E_{\lambda}$ by obtaining its vertical part.

\begin{lemma}
Assume that neither $N^h$ nor $N^v$ vanishes, then in the basis $$\Big((N^h,I^+N^h,J^+N^h,K^+N^h),(N^v,IN^v)\Big)$$ of $T_p \F$, one of the eigenspaces of $A|_{(\It N)^{\perp}}$ is
$$
E_{\lambda}=\Vect\left(\begin{array}{c}\left(\begin{array}{c}\Vert N^v\Vert^2\\0\\0\\ \delta_+\Vert N^v\Vert^2\end{array}\right)\\ 
\\\left(\begin{array}{c}-\Vert N^h\Vert^2\\0\end{array}\right)\end{array}
,\begin{array}{c}\left(\begin{array}{c}0\\\Vert N^v\Vert^2\\-\delta_+\Vert N^v\Vert^2\\0\end{array}\right)\\ 
\\\left(\begin{array}{c}0\\-\Vert N^h\Vert^2\end{array}\right)\end{array}\right),
$$
while the other corresponds to $\delta_-$.
\end{lemma}
\begin{proof}
From Lemmas~\ref{lemma12} and \ref{lemma13}, without loss of generality, we know that
$$
d\pi(E_{\lambda})=
\Vect\left(\left(\begin{array}{c}1\\0\\0 \\\delta_+\end{array}\right),\left(\begin{array}{c}0\\1\\-\delta_+\\0\end{array}\right)\right),
 $$
 so, in the basis $\Big((N^h,I^+N^h,J^+N^h,K^+N^h),(N^v,IN^v)\Big)$, as $E_\lambda \perp \Vect (N, \It N)$, we necessarily have the above description of $E_{\lambda}$.
\end{proof}
To conclude the proof of Proposition~\ref{proposition2}, first recall that a nearly K{\"a}hler manifold satisfies~\cite{Gray1969}
\begin{equation}\label{star}
    \Vert (\nabla_X\mathbb J_2)N\Vert^2=R(X,N,X,N)+R(\mathbb J_2N,\mathbb J_2X,X,N),
\end{equation}
and, moreover, in dimension six, we have~\cite{Gray1970,Gray1976}
\begin{equation}\label{starstar}
\Vert (\nabla_X\mathbb J_2)N\Vert^2=\alpha\Vert X\Vert^2 ,
\end{equation}
where $\alpha =1$ for $\F = Z(\overline{\CP^2})$, since $\scal_{\F} = 24$ (cf.~\cite{Davidov1991}).

If $X$ is a vector field in $E_\lambda$, we have by Proposition~\ref{prop1}
\begin{align*}
&R(X,N,X,N) =R^{\CP^2} \Big(d\pi(X),d\pi(N),d\pi(X),d\pi(N)\Big)\\
&-\tfrac{11}{4}g^h(N,X)^2+\tfrac{21}{4}g^h(N,\mathbb J_2 X)^2
-\tfrac{15}{4}(\Vert N^h \Vert^2 \Vert X\Vert^2 +\Vert X^h\Vert^2)\\
&+\tfrac{11}{4}\Vert N^h \Vert^2\Vert X^h\Vert^2 +4 \Vert X\Vert^2 ,
\end{align*}
with
$$\pi^*R^{\CP^2}(X,N,X,N)=-\Vert X^h\Vert^2\Vert N^h\Vert^2+g^h(N,X)^2-3g^h(\I N,X)^2 
$$
hence
\begin{align*}
&R(X,N,X,N)=-\tfrac74 g^h(N,X)^2+ \tfrac{21}{4}g^h(N,\mathbb J_2 X)^2\\
&-3g^h(\mathbb J_{\CP^2} N,X)^2 - \tfrac{15}{4}(\Vert N^h \Vert^2\Vert X\Vert^2+\Vert X^h\Vert^2)+\tfrac74 \Vert N^h \Vert^2\Vert X^h\Vert^2 
+4\Vert X\Vert^2 .
\end{align*}
Second, observe that from page~\pageref{page8}, we have that
\begin{align*}
    &R(X,N,X,N)+R(\mathbb J_2N,\mathbb J_2X,X,N)= -g^h(\I N,X)^2-g^h(\mathbb J_2\I N,X)^2\\
&-3(\Vert N^h\Vert^2 \Vert X\Vert^2+\Vert X^h\Vert^2) 
+g^h(\mathbb J_2\I N,N)g^h(\mathbb J_2\I X,X)+4\Vert X\Vert^2.
\end{align*}
For $X\in E_\lambda$ of the form
$$ X =\left(\begin{array}{c}\left(\begin{array}{c}\Vert N^v\Vert^2\\0\\0\\ \delta_+\Vert N^v\Vert^2\end{array}\right)\\ 
\\\left(\begin{array}{c}-\Vert N^h\Vert^2\\0\end{array}\right)\end{array}\right)$$
we compute that:
\begin{align*}
&g^h(\I N,X)^2=0 ; g^h(\mathbb J_2\I N,X)^2=(-\tilde{c}+\tilde{s}\delta_+)^2\Vert N^h\Vert^4\Vert N^v\Vert^4 ; \\
&\Vert X\Vert^2=(1+\delta_+^2)\Vert N^h\Vert^2\Vert N^v\Vert^4+\Vert N^h\Vert^4\Vert N^v\Vert^2=\delta_+^2\Vert N^h\Vert^2\Vert N^v\Vert^4+\Vert N^h\Vert^2\Vert N^v\Vert^2 ;\\
&\Vert X^h\Vert^2=(1+\delta_+^2)\Vert N^h\Vert^2\Vert N^v\Vert^4 ;
g^h(\mathbb J_2\I N,N)=-\tilde{c}\Vert N^h\Vert^2 ;\\
&g^h(\mathbb J_2\I X,X)=\Big((-\tilde{c}+\tilde{s}\delta_+)+\delta_+(\tilde{s}+\tilde{c}\delta_+)\Big)\Vert N^h\Vert^2\Vert N^v\Vert^4 .
\end{align*}
Then Equation~\eqref{star} yields
$$
-4\delta_+^2\Vert N^h\Vert^4\Vert N^v\Vert^4=0,
$$
and this is impossible by the observation at the end of the proof of Lemma~\ref{lemma13}.\\
This concludes the proof of Proposition~\ref{proposition2}.
\end{proof}

We can now exclude the remaining case, since $N$ vertical has already been ruled out by Proposition~\ref{proppage6}.

\begin{proposition}\label{prop15}
Let $H$ be a hypersurface of $\F$ such that $A\varphi = \varphi A$. Then the normal vector $N$ cannot be horizontal.
\end{proposition}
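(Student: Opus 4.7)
My plan is to follow the blueprint of Proposition~\ref{lemmaB}, substituting the curvature tensor of $\CP^2$ for that of $\sq$. Suppose $N^v=0$ and let $p=(x_0,I)\in H$. For any horizontal $X$, O'Neill's formula combined with the Davidov--de~Bartolomeis identification already used in Proposition~\ref{lemmaB} yields
\[
(AX)^v \;=\; -\tfrac12\,[X,N]^v \;=\; -\tfrac12\,\reallywidehat{R^{\CP^2}(X\wedge N)},
\]
where $\widehat{P}=[I,P]$ maps $\so(T_{x_0}\CP^2)$ onto the vertical tangent space $\vv_p$. I would then adopt the adapted orthonormal frame $(e_1,e_2,e_3,e_4)$ of $T_{x_0}\CP^2$ from Proposition~\ref{proposition2}: $e_1=N$, $e_2=Ie_1$, $e_3$ the unit part of $\I e_1$ orthogonal to $\Span(e_1,e_2)$ (or any unit vector of $(e_1,e_2)^\perp$ if this vanishes), and $e_4=Ie_3$. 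In the basis of $\bigwedge^2 T_{x_0}\CP^2$ of page~\pageref{system} one has $I=I^+$, $\I=\tilde c I^- + \tilde s J^-$, and the vertical space at $p$ is spanned by $J^+$ and $K^+$.

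The next step is the computation of $R^{\CP^2}(e_3\wedge e_1)$ and $R^{\CP^2}(e_4\wedge e_1)$ from the explicit curvature formula of $\CP^2$. A direct expansion shows that each of these splits as a self-dual bivector in $\Span(J^+,K^+)$ plus a multiple of $\I\in\bigwedge^2_-(\overline{\CP^2})$, the latter being the K\"ahler contribution absent in the round sphere case. The key simplification is that $I^+$ commutes with every anti-self-dual bivector (self-dual and anti-self-dual $2$-forms live in commuting copies of $\so(3)\subset\so(4)$), so the operator $\widehat{\,\cdot\,}=[I^+,\cdot\,]$ annihilates the $\I$-term entirely. The expected outcomes are $(Ae_3)^v$ proportional to $K^+$ and $(Ae_4)^v$ proportional to $J^+$, both independent of $(\tilde c,\tilde s)$.

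Writing $A$ in the block basis $(\{e_2\},\{e_3,e_4\},\{J^+,K^+\})$ exactly as in Proposition~\ref{lemmaB}, and using that $\varphi$ acts as the rotation $\bigl(\begin{smallmatrix}0 & -1 \\ 1 & \phantom{-}0\end{smallmatrix}\bigr)$ on the horizontal block $\{e_3,e_4\}$ and as its opposite on the vertical block $\{J^+,K^+\}$ (because $\It=-\Io$ on $\vv$), the relation $A\varphi=\varphi A$ forces the off-diagonal $2\times 2$ block $F$ of $A$ to anti-commute with this rotation matrix. The explicit $F$ produced above, however, commutes with it instead, yielding the desired contradiction and ruling out the horizontal case.

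The main obstacle is the curvature calculation itself. Unlike $\sq$, whose curvature operator is a scalar multiple of the identity on $\bigwedge^2$, the $\CP^2$ Riemann tensor has a non-trivial K\"ahler component that a priori complicates both $R^{\CP^2}(e_3\wedge e_1)$ and $R^{\CP^2}(e_4\wedge e_1)$. The saving feature is that every such K\"ahler correction is anti-self-dual with respect to $\overline{\CP^2}$ and therefore commutes with the fiber complex structure $I=I^+$; the obstruction then reduces to the same algebraic identity that ruled out the $\CP^3$ case, which is precisely why the present proof parallels that of Proposition~\ref{lemmaB} so closely.
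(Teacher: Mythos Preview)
Your proposal is correct and follows essentially the same approach as the paper: the same adapted frame, the same O'Neill/twistor computation of $(Ae_3)^v$ and $(Ae_4)^v$ via $\reallywidehat{R^{\CP^2}(\,\cdot\,\wedge N)}$, and the same block-matrix contradiction from $A\varphi=\varphi A$. Your observation that the anti-self-dual part of $R^{\CP^2}(e_i\wedge e_1)$ is in fact a multiple of $\I$ itself (hence killed by $[I^+,\cdot\,]$, leaving a result independent of $(\tilde c,\tilde s)$) is exactly the mechanism the paper exploits, and the paper's explicit calculation confirms $(Ae_3)^v=K^+$, $(Ae_4)^v=-J^+$, so $F=\bigl(\begin{smallmatrix}0&-1\\1&0\end{smallmatrix}\bigr)$ commutes with $I$ rather than anticommuting.
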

\begin{proof}
If $N$ were horizontal, then by O'Neill's formula~\cite{Oneill}, for any horizontal vector $X$
$$
(AX)^v=(-\nabla_X N)^v=-\frac12 [X,N]^v ,
$$
and by~\cite{Davidov1991,deBartolomeis}, $[X,N]^v = \reallywidehat{R^{\CP^2}(X\wedge N)}$.
Let $p\in H \subset\F$, $p= (x_0, I), x_0 \in \CP^2$.
We identify vectors tangent to $\CP^2$ at $x_0$ with their horizontal lifts in $T_p \F$.
Let $(e_1,e_2,e_3,e_4)$ be an orthonormal basis of $T_{x_0}\CP^2$ adapted to our problem, i.e.
$$
e_1=d\pi(N),\quad  I=I^+\textrm{ et } \I=\tilde{c}I^-+\tilde{s}J^- ,
$$
where
\begin{center}
    \(
        \left\{
\begin{array}{ccc}
I^+&=&e_1\wedge e_2+e_3\wedge e_4\\
J^+&=&e_1\wedge e_3-e_2\wedge e_4\\
K^+&=&e_1\wedge e_4+e_2\wedge e_3
\end{array}
\right.
    \)
  \hspace{.2in}  and \hspace{.2in}
    \(
        \left\{
\begin{array}{ccc}
I^-&=&e_1\wedge e_2-e_3\wedge e_4\\
J^-&=&e_1\wedge e_3+e_2\wedge e_4\\
K^-&=&-e_1\wedge e_4+e_2\wedge e_3 .
\end{array}
\right.
    \)
\end{center}

As in the case of $\sq$, there exists a surjection 
\begin{align*}
\so (T_{x_0}\CP^2) &\to \vv_{(x_0,I)}\\
P &\mapsto \widehat P:=[I,P]=IP-PI
\end{align*}
and
$$[X,N]^v =\reallywidehat{R^{\CP^2}(X\wedge N)}.$$
One easily obtains
\begin{align*}
R^{\CP^2}(e_1,e_3)e_1&=-(1+3\tilde{s}^2)e_3-3\tilde{c}\tilde{s}e_2 ;\\
R^{\CP^2}(e_1,e_3)e_2&=(1-3\tilde{s}^2)e_4 +3\tilde{c}\tilde{s}e_1 ;\\
R^{\CP^2}(e_1,e_3)e_3&=3\tilde{c}\tilde{s} e_4 +(3\tilde{s}^2+1)e_1 ;\\
R^{\CP^2}(e_1,e_3)e_4&=-3\tilde{c}\tilde{s} e_3 - (1-3\tilde{s}^2)e_2 ,
\end{align*}
so that
$$
R^{\CP^2}(e_1\wedge e_3)=-3\tilde{c}\tilde{s} I^--(1+3\tilde{s}^2)e_1\wedge e_3+(1-3\tilde{s}^2)e_2\wedge e_4\\$$
and substituting $e_1\wedge e_3=\frac12 (J^++ J^-)$ and $e_2\wedge e_4=\frac12(J^- - J^+)$, we obtain
$$
\reallywidehat{R^{\CP^2}(e_1,e_3)}=\Big[I^+,R^{\CP^2}(e_1,e_3)\Big]=-2 K^+. 
$$
Similarly
\begin{align*}
R^{\CP^2}(e_1 \wedge e_4)&=-K^+ .
\end{align*}
As elements of $\bigwedge^+$ and $\bigwedge^-$ commute with each other
\begin{align*}
\mathcal V(A e_3)&=-\frac12\reallywidehat{R^{\CP^2}(e_1,e_3)}= K^+ ,\\
\mathcal V(A e_4)&=-\frac12\reallywidehat{R^{\CP^2}(e_1,e_4)}=- J^+ .
\end{align*}
In the basis $\Big(\{e_2\},\{e_3,e_4\},\{J^+,K^+\}\Big)$, the endomorphisms $A$ and $\varphi$ have the following block matrices
$$
A=\left(\begin{array}{ccc}\mu &0&0\\
&&\\
0&E&F\\
&&\\
0&F&G\end{array}\right)
\quad\textrm{ with
$F=\left(\begin{array}{cc}
0&-1\\
1&0
\end{array}
\right)
$, }
$$
and
$$
\varphi=\left(\begin{array}{ccc}0&0&0\\0&I&0\\0&0&-I\end{array}\right) \textrm{ with } I=\left(\begin{array}{ccc}0&-1\\1&0\end{array}\right)
.$$
By hypothesis, $A$ and $\varphi$ commute, which contradicts the form of block $F$.
\end{proof}



\begin{thebibliography}{99}

\bibitem{Apostolov98}
\textsc{V.~Apostolov, G.~Grantcharov and S.~Ivanov},
\newblock Hermitian structures on twistor spaces,
\newblock Ann. Global Anal. Geom. 16 (1998), 291--308.

\bibitem{Atiyah}
\textsc{M.~F. Atiyah, N.~J. Hitchin and I.~M. Singer},
\newblock Self-duality in four-dimensional {R}iemannian geometry,
\newblock Proc. Roy. Soc. London Ser. A 362 (1978), 425--461.

\bibitem{Berndt1995}
\textsc{J.~Berndt, J.~Bolton and L.~M. Woodward},
\newblock Almost complex curves and {H}opf hypersurfaces in the nearly
  {K}\"{a}hler 6-sphere,
\newblock Geom. Dedicata 56 (1995), 237--247.

\bibitem{Blair1971}
\textsc{D.~E. Blair},
\newblock Almost contact manifolds with {K}illing structure tensors,
\newblock Pacific J. Math. 39 (1971), 285--292.

\bibitem{Butruille2005}
\textsc{J.-B. Butruille},
\newblock Classification des vari{\'{e}}t{\'{e}}s approximativement
  k\"{a}hl{\'{e}}riennes homog{\`{e}}nes,
\newblock Ann. Global Anal. Geom. 27 (2005), 201--225.

\bibitem{Cecil2015}
\textsc{Th.~E. Cecil and P.~J. Ryan},
\newblock Geometry of Hypersurfaces,
\newblock Springer New York, 2015.

\bibitem{Davidov1991}
\textsc{J.~Davidov and O.~Muskarov},
\newblock On the {R}iemannian curvature of a twistor space,
\newblock Acta Mathematica Hungarica 58 (1991), 319--332.

\bibitem{deBartolomeis}
\textsc{P.~de~Bartolomeis and A.~Nannicini},
\newblock Introduction to differential geometry of twistor spaces,
\newblock In {\em Geometric theory of singular phenomena in partial
  differential equations ({C}ortona, 1995)}, Sympos. Math., XXXVIII, pages
  91--160. Cambridge Univ. Press, Cambridge, 1998.

\bibitem{Eells-Salamon}
\textsc{J.~Eells and S.~Salamon},
\newblock Twistorial construction of harmonic maps of surfaces into
  four-manifolds,
\newblock Ann. Scuola Norm. Sup. Pisa Cl. Sci. 12 (1985), 589--640.

\bibitem{Friedrich}
\textsc{Th. Friedrich and H.~Kurke},
\newblock Compact four-dimensional self-dual {E}instein manifolds with positive
  scalar curvature,
\newblock Math. Nachr. 106 (1982), 271--299.

\bibitem{Gray1969}
\textsc{A.~Gray},
\newblock Almost complex submanifolds of the six sphere,
\newblock Proc. Amer. Math. Soc. 20 (1969), 277--277.

\bibitem{Gray1970}
\textsc{A.~Gray},
\newblock Nearly {K}\"{a}hler manifolds,
\newblock J. Differential Geom. 4 (1970), 283--309.

\bibitem{Gray1976}
\textsc{A.~Gray},
\newblock The structure of nearly {K}\"{a}hler manifolds,
\newblock Math. Ann. 223 (1976), 233--248.

\bibitem{Hitchin}
\textsc{N.~J. Hitchin},
\newblock K\"{a}hlerian twistor spaces,
\newblock Proc. London Math. Soc. 43 (1981), 133--150.

\bibitem{Hu2017}
\textsc{Z.~Hu, Z.~Yao and Y.~Zhang},
\newblock On some hypersurfaces of the homogeneous nearly {K}\"{a}hler
  $\mathbb{S}^3 \times \mathbb{S}^3$,
\newblock Math. Nachr. 291 (2017), 343--373.

\bibitem{Martins2001}
\textsc{J.~Kenedy Martins},
\newblock Congruence of hypersurfaces in $\sn^6$ and in
  $\mathbb{C}\mathrm{P}^n$,
\newblock Bull. Braz. Math. Soc. 32 (2001), 83--105.

\bibitem{Montiel}
\textsc{S.~Montiel},
\newblock Real hypersurfaces of a complex hyperbolic space,
\newblock J. Math. Soc. Japan 37 (1985), 515--535.

\bibitem{Moruz2018}
\textsc{M.~Moruz and L.~Vrancken},
\newblock Properties of the nearly {K}\"{a}hler $\mathbb{S}^3 \times
  \mathbb{S}^3$,
\newblock Publ. Inst. Math. (Beograd) 103 (2018), 147--158.

\bibitem{Mush}
\textsc{O.~Mu\v{s}karov},
\newblock Structures presque hermitiennes sur des espaces twistoriels et leurs
  types,
\newblock C. R. Math. Acad. Sci. Paris 305 (1987), 307--309.

\bibitem{Nagy2002}
\textsc{P.-A. Nagy},
\newblock Nearly {K}\"{a}hler geometry and {R}iemannian foliations,
\newblock Asian J. Math. 6 (2002), 481--504.

\bibitem{Oneill}
\textsc{B.~O'Neill},
\newblock The fundamental equations of a submersion,
\newblock Michigan Math. J. 13 (1966), 459--469.

\bibitem{Takagi1}
\textsc{R.~Takagi},
\newblock Real hypersurfaces in a complex projective space with constant
  principal curvatures,
\newblock J. Math. Soc. Japan 27 (1975), 43--53.

\bibitem{Takagi2}
\textsc{R.~Takagi},
\newblock Real hypersurfaces in a complex projective space with constant
  principal curvatures. {II},
\newblock J. Math. Soc. Japan 27 (1975), 507--516.

\end{thebibliography}
\end{document}